\let\@wraptoccontribs\wraptoccontribs
\newtheorem{lemma}{Lemma}[section]
\newtheorem{prop}[lemma]{Proposition}
\newtheorem{thm}[lemma]{Theorem}
\newtheorem{cor}[lemma]{Corollary}
\theoremstyle{definition}
\newtheorem{defn}[lemma]{Definition}
\newtheorem{notation}[lemma]{Notation}
\newtheorem{ex}[lemma]{Example}
\newtheorem{rem}[lemma]{Remark}
\newcommand{\Z}{\mathbb{Z}}
\newcommand{\R}{\mathbb{R}}
\newcommand{\Sh}{\mathcal{S}}
\newcommand{\ohne}{\smallsetminus}
\newcommand{\ssm}{\smallsetminus}
\newcommand{\md}{\,\mathrm{d}} 
\newcommand{\vol}{\mathrm{vol}}
\newcommand{\reg}{\mathrm{Reg}}
\newcommand{\stokes}{\mathrm{Stokes}}
\newcommand{\fin}{\mathrm{fin}}
\newcommand{\glC}{\mathrm{s}C^1}
\newcommand{\sing}{\mathrm{sing}}
\newcommand{\df}{\mathrm{def}}
\begin{document}
\title{The period isomorphism in tame geometry}
\author{Annette Huber}
\address{Math. Institut, Albert-Ludwigs-Universität Freiburg, Ernst-Zermelo-Str.~1, 79102 Freiburg, Germany}
\email{annette.huber@math.uni-freiburg.de}
\contrib[with an Appendix joint with]{Johan Commelin}
\date{\today}
\maketitle

\begin{abstract}
We describe singular homology of a manifold $X$ via simplices
 $\sigma:\Delta_d\to X$ that satisfy Stokes' formula with respect to all differential forms. The notion is geared to the case of tame geometry (definable manifolds with respect to an o-minimal structure), where it gives a description of
the period pairing with de Rham cohomology via definable simplices.
\end{abstract}

In this note we close a gap in the literature on the period pairing. If $X$ is a differentiable manifold, there is a canonical isomorphism between de Rham cohomology and singular cohomology. It is induced from the \emph{period pairing} between de Rham cohomology and singular homology. The pairing has a good description
by integration
\[ (\sigma,\omega)\mapsto \int_{\Delta_d}\sigma^*\omega.\]
In order for this formula to make sense, the map $\sigma:\Delta_d\to X$ has to have good regularity properties. A good choice is to restrict to smooth maps.
If $X$ is a definable manifold for some o-minimal structure, e.g., if $X\subset\R^N$ is semi-algebraic, then the integral is absolutely convergent without any regularity assumptions. In \cite{period-buch}, this was used to give an alternative description of the set of period numbers in terms of semi-algebraic sets. Indeed, such a description is used as a definition for the notion of a period number in \cite{kontsevich_zagier}.

There are two problems that were not addressed in \cite{period-buch}: 
\begin{enumerate}
\item in order to get a well-defined pairing on homology, we need to establish Stokes' formula for semi-algebraic $\sigma$;
\item in order to show that the two pairings agree, we need to compare smooth and semi-algebraic $\sigma$'s.
\end{enumerate}
The same problems also appear in the setting of exponential periods treated in \cite{expper}, where it was side-stepped, see also Remark~\ref{comment} below.
We now present a conceptually clean solution. As in  \cite{expper}, we use input from the structure theory of definable sets: the existence of triangulations that are globally of class $C^1$ shown by Czap{\l}a--Paw{\l}ucki in \cite{omin-triang}. (An alternative is to apply the panel beating method of Ohmoto--Shiota \cite{ohmoto-shiota-triangulation} instead. It allows us to reparametrise a given semi-algebraic simplex as a $C^1$-map. There are some problems with this approach, see \cite[Section~7]{expper}.)

In the present note, we solve the two problems by introducing the notion of a simplex \emph{satisfying Stokes}. They are $C^1$ along open faces, all period integrals converge and satisfy Stokes' formula. We show that the complex built from these simplices computes singular homology. 

We also check change of variables and Stokes' formula for definable simplices without any regularity assumptions. This allows us to make the comparison we wanted.

To a great extent, the present paper is an exercise in exploiting the full strength of Czap{\l}a--Paw{\l}ucki's \cite{omin-triang}. Indeed, such applications to periods are the explicit rationale given by Ohmoto and Shiota in the introduction in the precursor \cite{ohmoto-shiota-triangulation}. Where we have to make analytic arguments in Section~\ref{sec:stokes}, they are on a much lower technical level.

\subsection*{Acknowledgement:} This note can be read as an addendum to the paper \cite{expper} with Johan Commelin and Philipp Habegger. I thank them for many discussions on issues related with period integrals and tame geometry. I am also indebted to my colleague Sebastian Goette for help with integrability computations. I am grateful to Tobias Kaiser and a referee for guiding  me through the o-minimal literature and both referees for their many comments and corrections.
The construction in the appendix is joint work with Johan Commelin. I thank him for allowing me to add it to the paper.

\section{Set-up}\label{sec:set-up}

Following \cite[4.6 (1)]{warner}, we define the \emph{standard $d$-simplex} as
\[ \Delta_d=\left\{(a_1,\dots,a_d)\in\R^d| a_i\geq 0, \sum a_i\leq 1\right\}.\]
By an \emph{open face} of $\Delta_d$, we mean the interior of a face (of any dimension) of $\Delta_d$. Throughout we are going to consider continuous
maps
\[ \sigma:\Delta_d\to\R^N\]
whose restriction to each open face is of class $C^1$. 


Most arguments center on the following construction, see \cite[p.~194]{warner}: 
to the simplex $\sigma$ we assign the cone
\begin{align*}
 \hat{\sigma}:\Delta_{d+1}&\to\R^N,\\
(a_0,a_1,\dots,a_d)&\mapsto \begin{cases} A\sigma(a_1/A,\dots,a_d/A),&
 A=\sum_{i=0}^da_i\neq 0,\\
 0 &A=0.
\end{cases}
\end{align*}
The simplex $\hat{\sigma}$ has a vertex at $0$ and the opposite face equal to $\sigma$. It interpolates linearly in between. The  map is again continuous (even for $A\to 0$) and $C^1$ on all open faces. This simplicial version of a homotopy can also be described via
\begin{align*}
\bar{\sigma}:[0,1]\times\Delta_d&\to\R^N\\
\bar{\sigma}:(t,b_1,\dots,b_d)&\mapsto (1-t)\sigma(b_1,\dots,b_d).
\end{align*}

Consider
\begin{align*}
	q \colon [0,1] \times \Delta_d &\to \Delta_{d+1} \\
	(t,b_1,\dots,b_d) &\mapsto ((1-t)(1-\sum_{i=1}^db_i), (1-t)b_1, \dots, (1-t)b_d).
\end{align*}
We have $\bar\sigma = \hat\sigma \circ q$ because with $a_0=(1-t)(1-\sum_{i=1}^db_i)$ and $a_i=(1-t)b_i$ for $i\geq 1$ we have $A=\sum_{i=0}^da_i=1-t$.

Note that $q|_{[0,1)\times\Delta_d}$ admits the inverse
$i:\Delta_{d+1}\ohne \{0\} \to [0,1)\times\Delta_d$ given by
\[  i:(a_0,a_1,\dots,a_d)\mapsto (1-A,a_1/A,\dots a_d/A).\]
 It extends to a diffeomorphism on an open neighbourhood of $\Delta_{d+1}\ohne \{0\}$. These formulas express (in the $d=1$ case) the fact 
that a triangle with one vertex removed can be reparametrised as square with one edge removed.

\section{Finite volume}

Let $\sigma:\Delta_d\to \R^N$ be as fixed in the last section, in particular continuous and $C^1$ along each open face.
\begin{defn}\label{defn:fvol} 
We say that
$\sigma$ has \emph{finite volume} if 
\begin{equation}\label{eq:volume} \int_{\Delta_d}\sigma^*(\omega)\end{equation} 
converges
absolutely for every continuous $d$-form $\omega$ on $\sigma(\Delta_d)$.
\end{defn}

The notion extends to chains (i.e.,  formal linear combinations of such maps): 
$\sum_{i=1}^na_i\tau_i$ is said to have finite volume if each $\tau_i$ has finite volume.

\begin{rem}The pull-back of $\omega$ to the interior of $\Delta_d$ is
a continuous $d$-form. The integral exists locally. Global existence, i.e,
convergence when approaching the boundary, is the issue. The condition is equivalent  to integrability (in the measure theoretic sense) of  $\sigma^*(\omega)$, in other words, whether the pull-back of $\omega$ is $L^1$. 
\end{rem}

\begin{lemma}\label{lem:suffice}It suffices to check that the integral (\ref{eq:volume}) converges absolutely for 
the standard $d$-forms $\md x_{i_1}\wedge\dots \wedge \md x_{i_d}$
for all $\{i_1,\dots,i_d\}\subset \{1,\dots,N\}$.
\end{lemma}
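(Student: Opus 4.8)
The plan is to reduce the general case to the hypothesis by a combination of linearity and a boundedness estimate. First I would write an arbitrary continuous $d$-form $\omega$ on $\sigma(\Delta_d)$ in the standard basis as
\[ \omega = \sum_{i_1<\dots<i_d} f_{i_1\dots i_d}\,\md x_{i_1}\wedge\dots\wedge\md x_{i_d}, \]
where the finitely many coefficient functions $f_{i_1\dots i_d}$ are continuous on $\sigma(\Delta_d)$. Pulling back along $\sigma$, which is $C^1$ on the interior of $\Delta_d$, gives
\[ \sigma^*\omega = \sum_{i_1<\dots<i_d} (f_{i_1\dots i_d}\circ\sigma)\,\sigma^*(\md x_{i_1}\wedge\dots\wedge\md x_{i_d}) \]
as continuous $d$-forms on the open top face, by naturality of pull-back and the fact that $f_{i_1\dots i_d}\circ\sigma$ is continuous.

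The key observation is that $\sigma(\Delta_d)$ is compact, being the continuous image of the compact set $\Delta_d$, so each $f_{i_1\dots i_d}$ is bounded there; I would let $M$ be the maximum of the finitely many sup-norms $\|f_{i_1\dots i_d}\|_\infty$. Writing each pulled-back standard form relative to the volume form $\md a_1\wedge\dots\wedge\md a_d$ on $\Delta_d$, say $\sigma^*(\md x_{i_1}\wedge\dots\wedge\md x_{i_d}) = J_{i_1\dots i_d}\,\md a_1\wedge\dots\wedge\md a_d$ with $J_{i_1\dots i_d}$ the corresponding Jacobian minor of $\sigma$, the triangle inequality then yields the pointwise bound
\[ \bigl|\sigma^*\omega\bigr| \leq M\sum_{i_1<\dots<i_d}\bigl|\sigma^*(\md x_{i_1}\wedge\dots\wedge\md x_{i_d})\bigr| \]
on the interior of $\Delta_d$, where $|\cdot|$ denotes the density, i.e., the absolute value of the coefficient against the fixed volume form.

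Finally I would integrate over $\Delta_d$ — equivalently over its interior, the lower-dimensional faces being of measure zero — and use monotonicity of the integral together with the hypothesis that each $\int_{\Delta_d}\bigl|\sigma^*(\md x_{i_1}\wedge\dots\wedge\md x_{i_d})\bigr|$ is finite, to conclude that $\int_{\Delta_d}|\sigma^*\omega|<\infty$; that is, the integral converges absolutely. I do not expect a serious obstacle here. The only points requiring a little care are the legitimacy of the pointwise density comparison and of restricting attention to the single top-dimensional open face, which is precisely where the absolute (rather than oriented) nature of the convergence condition and the measure-zero status of the lower faces enter.
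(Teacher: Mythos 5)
Your proposal is correct and follows essentially the same route as the paper's own proof: decompose $\omega$ into standard forms with continuous coefficients, use compactness of $\sigma(\Delta_d)$ to bound those coefficients, and conclude integrability of the sum from the assumed integrability of each $\sigma^*(\md x_I)$. The paper states this more tersely (it asserts boundedness and integrability of each term $a_I\circ\sigma\cdot\sigma^*(\md x_I)$ without writing out the explicit constant $M$ or the density comparison), but the substance is identical.
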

\begin{proof}We write 
\[ \omega=\sum_Ia_I\md x_I\]
where the sum is over multi-indices of length $d$. As $\omega$ is continuous, the functions
 $a_I$ are continuous on $\sigma(\Delta_d)$, in particular bounded. The pull-back is
\[ \sigma^*(\omega)=\sum_I a_I\circ\sigma\cdot \sigma^*(\md x_I).\]
By assumption, all $\sigma^*(\md x_I)$ are integrable. The $a_I\circ\sigma$ are bounded and continuous. This makes the sum integrable.
\end{proof}
\begin{rem}\label{rem:sm_suffice}
In particular, it does not matter if the convergence condition (\ref{eq:volume}) is imposed for $C^\infty$ differential forms or continuous forms.
If $d=N$, then our condition is indeed equivalent to finiteness of $\vol(\sigma(\Delta_N))$. As a referee pointed out: this makes it automatic in this case because $\sigma(\Delta_N)$ is compact.
\end{rem}

\begin{ex}
\begin{enumerate}
\item If $\sigma$ is $C^1$ globally on $\Delta_d$, then $\sigma^*(\md x_I)$ is $C^0$, in particular
integrable.
\item If $\sigma$ is semi-algebraic, or more generally definable in some o-minimal structure, then
\[ \int_{\Delta_d}\sigma^*(\md x_I)=\int_{\sigma(\Delta_d)}\md x_I\]
converges absolutely, see Proposition~\ref{prop:converge}. 
\end{enumerate}
\end{ex}

\begin{lemma}\label{lem:is_int}
A measurable differential form $\omega$ on $\Delta_{d+1}$ is integrable if and only if
$q^*(\omega)$ is integrable on $[0,1]\times \Delta_d$.
\end{lemma}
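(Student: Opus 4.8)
The plan is to reduce the statement to the ordinary change of variables formula for the Lebesgue integral, exploiting that $q$ restricts to a diffeomorphism away from a set of measure zero. Here $\omega$ is a top-degree (i.e.\ $(d+1)$-)form, so that $q^*\omega$ is again top-degree on the $(d+1)$-dimensional space $[0,1]\times\Delta_d$, and integrability means that the associated density is $L^1$.

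First I would discard the bad loci. The point $\{0\}\subset\Delta_{d+1}$ and the face $\{1\}\times\Delta_d\subset[0,1]\times\Delta_d$ both have $(d+1)$-dimensional Lebesgue measure zero, so integrability of $\omega$ is equivalent to integrability of its restriction to $\Delta_{d+1}\ssm\{0\}$, and integrability of $q^*\omega$ is equivalent to integrability of its restriction to $[0,1)\times\Delta_d$. It therefore suffices to compare these two restrictions. On these open pieces $q$ is a diffeomorphism with inverse $i$, as recorded at the end of Section~\ref{sec:set-up}. Writing $\omega=f\,\md a_0\wedge\dots\wedge\md a_d$ with $f$ measurable, one has $q^*\omega=(f\circ q)\cdot\det(Dq)\,\md t\wedge\md b_1\wedge\dots\wedge\md b_d$, where a direct computation gives $|\det(Dq)|=(1-t)^d$, in particular nonvanishing on $[0,1)\times\Delta_d$. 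Integrability of $\omega$ means $\int|f|<\infty$ and integrability of $q^*\omega$ means $\int|f\circ q|\,(1-t)^d<\infty$; the (unsigned) change of variables theorem applied to the nonnegative function $|f|$ yields
\[ \int_{\Delta_{d+1}\ssm\{0\}}|f|\;=\;\int_{[0,1)\times\Delta_d}|f\circ q|\,(1-t)^d, \]
so one integral is finite exactly when the other is. Measurability of $q^*\omega$ on $[0,1)\times\Delta_d$ is automatic since $q$ is a diffeomorphism there, and on the discarded null sets measurability plays no role.

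The only genuine point to get right — and hence the main obstacle, such as it is — is that the change of variables formula for differential forms is a priori an identity of \emph{oriented} integrals, whereas integrability is a statement about absolute values. This is resolved by passing from $\omega$ to the associated nonnegative density $|f|$ and invoking change of variables for Lebesgue measure rather than for oriented integration; the Jacobian factor $(1-t)^d$ then enters as a genuine positive weight, and no orientation bookkeeping is needed. Everything else is the routine observation that removing a null set from either domain changes neither integrability nor the value of the $L^1$-norm.
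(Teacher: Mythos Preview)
Your argument is correct and follows essentially the same route as the paper: both observe that $q$ is a diffeomorphism off a null set and then invoke change of variables to conclude that the two $L^1$-conditions are equivalent. Your version is simply more explicit (you compute $|\det Dq|=(1-t)^d$ and spell out the passage from oriented integrals to densities), whereas the paper compresses this into the remark that the Jacobian is nonvanishing on the connected domain.
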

\begin{proof}The map $q$ is a diffeomorphism outside a set of measure $0$. 
The sign of the Jacobian determinant cannot change because it does not vanish for diffeomorphisms and $[0,1]\times \Delta_d$ is connected.
One of the integrals is finite if and only if the other is. 
\end{proof}

We need to check that the notion of having finite volume is stable under homotopies.

\begin{prop}\label{prop:fin_vol}
If $\sigma$ has finite volume, then so does the cone $\hat{\sigma}$.
\end{prop}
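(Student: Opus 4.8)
The plan is to reduce everything to the already-established finite volume of $\sigma$ by pulling the cone back along $q$. By Lemma~\ref{lem:suffice} it suffices to show that $\hat\sigma^*(\md x_I)$ is absolutely integrable on $\Delta_{d+1}$ for each multi-index $I=\{i_1,\dots,i_{d+1}\}$ of length $d+1$. Since $\hat\sigma\circ q=\bar\sigma$, Lemma~\ref{lem:is_int} lets me replace this by the integrability of $\bar\sigma^*(\md x_I)$ on the product $[0,1]\times\Delta_d$. This is the right reduction because $\bar\sigma$ has the transparent product form $\bar\sigma(t,b_1,\dots,b_d)=(1-t)\sigma(b_1,\dots,b_d)$, which separates the cone direction $t$ from the base simplex.

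Next I would compute the pullback explicitly. Writing $\sigma_k$ for the $k$-th component of $\sigma$, each coordinate of $\bar\sigma$ is $x_k\circ\bar\sigma=(1-t)\sigma_k$, so $\md(x_k\circ\bar\sigma)=-\sigma_k\,\md t+(1-t)\,\md\sigma_k$. The key point is now a dimension count: wedging the $d+1$ factors $-\sigma_{i_m}\,\md t+(1-t)\,\md\sigma_{i_m}$ together, the term in which no factor contributes $\md t$ is $(1-t)^{d+1}\,\md\sigma_{i_1}\wedge\cdots\wedge\md\sigma_{i_{d+1}}$, a $(d+1)$-form in the $d$ variables $b_1,\dots,b_d$, hence identically zero. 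Therefore only the terms with exactly one $\md t$ survive, giving
\[ \bar\sigma^*(\md x_I)=(1-t)^d\,\md t\wedge\sum_{m=1}^{d+1}(-1)^m\sigma_{i_m}\,\sigma^*(\md x_{J_m}),\qquad J_m=I\ssm\{i_m\}, \]
where each $J_m$ has length $d$ and $\sigma^*(\md x_{J_m})$ is a $d$-form on $\Delta_d$.

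It remains to estimate. I would bound $|\bar\sigma^*(\md x_I)|$ termwise. The coordinate functions $\sigma_{i_m}$ are bounded since $\sigma(\Delta_d)$ is compact; call the bound $C$. The factor $(1-t)^d$ integrates to $\tfrac{1}{d+1}$ over $t\in[0,1]$. Each $\sigma^*(\md x_{J_m})$ is absolutely integrable on $\Delta_d$ precisely because $\sigma$ has finite volume and $J_m$ has length $d$. By Fubini over the product $[0,1]\times\Delta_d$,
\[ \int_{[0,1]\times\Delta_d}\bigl|\bar\sigma^*(\md x_I)\bigr| \;\leq\; \frac{C}{d+1}\sum_{m=1}^{d+1}\int_{\Delta_d}\bigl|\sigma^*(\md x_{J_m})\bigr| \;<\;\infty. \]
Hence $\bar\sigma^*(\md x_I)$ is integrable, so by Lemma~\ref{lem:is_int} so is $\hat\sigma^*(\md x_I)$, and by Lemma~\ref{lem:suffice} the cone $\hat\sigma$ has finite volume.

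The only place requiring genuine care is the vanishing of the $\md t$-free term: this dimension collapse is exactly what converts a naively $(d+1)$-dimensional integrability problem into $d$-dimensional ones already controlled by the hypothesis. Everything else is the bookkeeping of expanding a wedge together with a routine product estimate, and no analytic input beyond the finite volume of $\sigma$ and the compactness of its image is needed.
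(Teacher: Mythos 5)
Your proof is correct and follows essentially the same route as the paper: reduce to $\bar\sigma$ via Lemma~\ref{lem:is_int}, reduce to standard forms via Lemma~\ref{lem:suffice}, expand the pullback so that the $\md t$-free term dies for dimension reasons, and conclude by boundedness of the components of $\sigma$ together with Fubini. The formula you derive, including the signs, matches the paper's $\bar{\sigma}^*(\omega)=\sum_{j}(-1)^{j}(1-t)^d\sigma_j\,\md t\wedge\sigma^*(\omega_j)$, so there is nothing to add.
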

\begin{proof}
By Lemma~\ref{lem:is_int} it suffices to establish finiteness for $\bar{\sigma}$. By Lemma~\ref{lem:suffice} it suffices to consider the standard differential forms $ \md x_I$.

Let $\omega=\md x_1\wedge\dots\wedge \md x_{d+1}$ and $\omega_j$ the wedge product with the factor $\md x_j$ dropped. Let $\sigma=(\sigma_1,\dots,\sigma_N)$ and $\bar{\sigma}=(\bar{\sigma}_1,\dots,\bar{\sigma}_N)$, so that the $\sigma_i$ and $\bar{\sigma}_i$ are $\R$-valued functions. We compute  on $[0,1]\times\Delta_d$ in the coordinates of Section~\ref{sec:set-up}: 
\begin{align*}
\frac{\partial}{\partial t}\bar{\sigma}_j&=-\sigma_j\\
\frac{\partial}{\partial b_i}\bar{\sigma}_j&=(1-t)\frac{\partial}{\partial b_i}\sigma_j
\end{align*}
These are the entries of the Jacobian matrix. Hence
\[ \bar{\sigma}^*(\omega)=\sum_{j=1}^{d+1}(-1)^{j}(1-t)^d\sigma_j\md t\wedge\sigma^*(\omega_j).\]
It suffices to treat each summand separately. By assumption 
$\sigma^*(\omega_j)$ is integrable. The function $\sigma_j$ is continuous, making
$\sigma_j\sigma^*(\omega_j)$ integrable on $\Delta_d$. By Fubini this makes
$(1-t)^d\sigma_j\md t\wedge\sigma^*(\omega_j)$ integrable on $[0,1]\times \Delta_d$.
\end{proof}

\section{Stokes}\label{sec:stokes}

\begin{defn}\label{defn:stokes}
We say that $\sigma:\Delta_d\to \R^N$ \emph{satisfies Stokes} if
$\sigma$ and the chain $\partial \sigma$ have finite volume and for every $(d-1)$-form $\omega$ of class $C^1$ on a neighbourhood of $\sigma(\Delta_d)$ we have the formula
\[ \int_{\Delta_d}\sigma^*(\md \omega)=\int_{\partial \Delta_d}\sigma^*(\omega).\]
\end{defn}
The notion immediately extends to oriented simplicial complexes. The only case that we need is $[0,1]\times\Delta_d$.

Again we want to check that the condition is well-behaved under our homotopies.
\begin{lemma} Assume that $\sigma$ and $\partial\sigma$ have finite volume.
Then $\hat{\sigma}$ satisfies 
Stokes on $\Delta_{d+1}$ if and only if $\bar{\sigma}$ satisfies Stokes on
$[0,1]\times \Delta_{d}$.
\end{lemma}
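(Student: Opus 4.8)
The plan is to transport both sides of the Stokes identity through the map $q$, exploiting that $\bar\sigma=\hat\sigma\circ q$ and that $q$ is a diffeomorphism away from a negligible set. Write $\eta=\hat\sigma^*(\omega)$, a $d$-form on a neighbourhood of $\hat\sigma(\Delta_{d+1})$; then $\bar\sigma^*(\omega)=q^*(\eta)$, and on the open faces where everything is $C^1$ we have $\bar\sigma^*(\md\omega)=q^*(\md\eta)=\md\bigl(q^*\eta\bigr)$. All integrals appearing in the two Stokes formulas converge absolutely under our hypothesis: $\hat\sigma$ has finite volume by Proposition~\ref{prop:fin_vol}, hence so does $\bar\sigma$ by Lemma~\ref{lem:is_int}; and the faces of $\partial\hat\sigma$ (resp.\ $\partial\bar\sigma$) are, by the cone relation, either $\sigma$ itself, cones (resp.\ homotopies) over the faces $\tau_j$ of $\partial\sigma$, or the constant map at $0$, all of which have finite volume by hypothesis together with Proposition~\ref{prop:fin_vol}. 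Thus the finiteness clauses of Definition~\ref{defn:stokes} hold on both sides automatically, and only the integral identity is at issue.

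First I would treat the top-dimensional integrals. As in the proof of Lemma~\ref{lem:is_int}, $q$ restricts to a diffeomorphism of $[0,1)\times\Delta_d$ onto $\Delta_{d+1}\ohne\{0\}$ whose Jacobian determinant has a constant sign $\epsilon\in\{\pm1\}$ by connectedness, while the complementary sets $\{1\}\times\Delta_d$ and $\{0\}$ are of measure zero. Change of variables for the integrable form $\md\eta$ therefore gives
\[ \int_{\Delta_{d+1}}\hat\sigma^*(\md\omega)=\epsilon\int_{[0,1]\times\Delta_d}\bar\sigma^*(\md\omega).\]

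Next I would match the boundaries. The map $q$ sends $\{0\}\times\Delta_d$ diffeomorphically onto the face $\{\sum a_i=1\}$ of $\Delta_{d+1}$, on which $\hat\sigma$ restricts to $\sigma=\bar\sigma|_{t=0}$; it sends each of the $d+1$ product faces $[0,1]\times F$, with $F\subset\Delta_d$ a codimension-one face, diffeomorphically onto one of the faces $\{a_i=0\}$, on which $\hat\sigma$ restricts to the corresponding cone; and it collapses $\{1\}\times\Delta_d$ to the vertex $0$. On this last face $\bar\sigma$ is the constant map at $0$, so $\bar\sigma^*(\omega)=0$ there and the face contributes nothing. Applying change of variables face by face, with the boundary orientations induced from the interior so that the same sign $\epsilon$ recurs throughout, yields
\[ \int_{\partial\Delta_{d+1}}\hat\sigma^*(\omega)=\epsilon\int_{\partial([0,1]\times\Delta_d)}\bar\sigma^*(\omega).\]

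Combining the two displays, the factor $\epsilon$ cancels, so the Stokes identity for $\hat\sigma$ holds for $\omega$ exactly when the Stokes identity for $\bar\sigma$ holds for $\omega$; as $\omega$ was an arbitrary $C^1$ form, the two conditions are equivalent. The main obstacle is the sign bookkeeping on the boundary: I must verify that the boundary-orientation conventions make the face-by-face change of variables carry the very same $\epsilon$ as the top-dimensional one, and that the degeneration of $q$ at the collapsed face $\{1\}\times\Delta_d$ and near the vertex $0$ does no harm. The latter is indeed harmless, since those sets are negligible for the top-dimensional integrals while the integrand $\bar\sigma^*(\omega)$ vanishes identically on the collapsed boundary face.
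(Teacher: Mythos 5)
Your proof is correct and follows essentially the same route as the paper's: transport both sides through $q$, which is a diffeomorphism away from a measure-zero set, match the boundary faces ($\{0\}\times\Delta_d$ with the face opposite the cone vertex, $[0,1]\times F$ with the cone faces $\hat F$), and observe that the collapsed face $\{1\}\times\Delta_d$ contributes nothing because $\bar\sigma$ is constant there. The only differences are presentational: you track the Jacobian sign $\epsilon$ explicitly and let it cancel, and you spell out the finite-volume clauses of the definition, both of which the paper's proof leaves implicit.
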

\begin{proof}
Let $\omega$ be a smooth $(d+1)$-form on a neighbourhood of 
$\hat{\sigma}(\Delta_{d+1})$. We write $I=[0,1]$. Recall that $\bar{\sigma}=\hat{\sigma}\circ q$. As $q$ is a diffeomorphism on the interior of
$I\times\Delta_d$, we have
\[ \int_{I\times\Delta_d}\md \bar{\sigma}^*(\omega)=
 \int_{I\times \Delta_d} \bar{\sigma}^*(\md \omega)=
 \int_{\Delta_{d+1}}\hat{\sigma}^*(\md \omega)=\int_{\Delta_{d+1}} \md\hat{\sigma}^*(\omega).\]
It remains to show that
\[ \int_{\partial(I\times \Delta_d)}\bar{\sigma}^*(\omega)=\int_{\partial\Delta_{d+1}}\hat{\sigma}^*(\omega).\]
We compare the contributions of the boundary components. We have
\[ \partial(I\times\Delta_d)=(\partial I)\times\Delta_d+ I\times (\partial \Delta_d).\]
The boundary of $\Delta_{d+1}$ consists of the face $\tau^0$ opposite to
the origin and faces $\tau^1,\dots,\tau^d$ with vertex $0$. They are of the form
$\hat{F}$ for a face $F$ of $\tau^0$. We have
\begin{align*}
 q(\{0\}\times \Delta_d)&=\tau^0,\\
 q(I\times F)&=\hat{F}.
\end{align*}
As $q$ is a diffeomorphism (at least on the interior of these faces), this again implies
that the integrals of the pull-backs of $\omega$ agree.
Finally,
\[ q(\{1\}\times\Delta_d)=\{0\}.\]
As the map $\bar{\sigma}$ is constant on this face, the pull-back $\bar{\sigma}^*(\omega)$ vanishes and does not contribute to the boundary integral.
\end{proof}

We work on $I\times\Delta_d$ from now on (where $I=[0,1]$) and think of the first coordinate as time. The computation becomes clearer if we allow slightly more general objects than $\bar{\sigma}$.

\begin{lemma}\label{lem:decomp}
Let $\sigma=(\sigma_1,\dots,\sigma_N):\Delta_d\to\R^N$ be continuous and $C^1$ on all open faces, $f:I\to \R$  a $C^1$-function, and $\tau:I\times\Delta_d\to\R^n$ given
by 
\[ \tau(t,b_1,\dots,b_d)=f(t)\sigma(b_1,\dots,b_d).\]
Let 
\[ \eta=h\md x_1\wedge\dots\wedge \md x_d\]
with a continuous function $h$ on $\tau(I\times \Delta_d)$.
Then
\[ \tau^*(\eta)=A+B\]
where
\begin{align*}
A&= \tau^*(h)f^d\md\sigma_1\wedge\dots\wedge\md \sigma_d,\\ 
B&= 
\md t\wedge \frac{\partial f}{\partial t}f^{d-1}\tau^* (C),\\
C&=h\sum_i(-1)^{i-1}x_i \md x_1\wedge\dots \wedge \dot{\md x_i}\dots \wedge \md x_d.
\end{align*}
(Here $\dot{\md x}_i$ means that we omit this factor.)

The restriction to the faces of $I\times \Delta_d$ are
\[ \tau^*(\eta)|_{\{0,1\}\times \Delta_d}=A|_{\{0,1\}\times\Delta_d}\]
and
\[ \tau^*(\eta)|_{I\times F}=B|_{I\times F}\]
for all $(d-1)$-faces $F$ of $\Delta_d$.
\end{lemma}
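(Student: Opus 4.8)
The plan is to compute the pull-back $\tau^*(\eta)$ directly in the coordinates $(t,b_1,\dots,b_d)$ and then sort the resulting terms according to how often $\md t$ occurs. First I would record the building blocks. Since $\tau_j=f(t)\sigma_j(b)$ and $\sigma_j$ does not depend on $t$, the pull-backs of the coordinate one-forms split into a time part and a spatial part,
\[ \tau^*(\md x_j)=\md\tau_j=\frac{\partial f}{\partial t}\,\sigma_j\,\md t+f\,\md\sigma_j, \]
while $\tau^*(h)=h\circ\tau$. Substituting these into $\tau^*(\eta)=\tau^*(h)\,\tau^*(\md x_1)\wedge\dots\wedge\tau^*(\md x_d)$ and expanding the wedge by multilinearity is the heart of the computation.

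Second, I would exploit $\md t\wedge\md t=0$. In the expansion each of the $d$ factors contributes either its time part $\frac{\partial f}{\partial t}\sigma_j\,\md t$ or its spatial part $f\,\md\sigma_j$, and at most one time part can survive. The terms with no $\md t$-factor assemble into $\tau^*(h)\,f^d\,\md\sigma_1\wedge\dots\wedge\md\sigma_d=A$. The terms with exactly one $\md t$-factor, coming from a slot $i$, produce the scalar weight $f^{d-1}\frac{\partial f}{\partial t}\,\sigma_i$ times $\md\sigma_1\wedge\dots\wedge\md t\wedge\dots\wedge\md\sigma_d$; moving $\md t$ past the $i-1$ one-forms to its left costs the sign $(-1)^{i-1}$. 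Summing over $i$ reassembles these into $B$: the alternating expression $\sum_i(-1)^{i-1}\sigma_i\,\md\sigma_1\wedge\dots\wedge\dot{\md\sigma}_i\wedge\dots\wedge\md\sigma_d$ is precisely the spatial part read off from $C$, which one recognises conceptually as the radial contraction $C=\iota_E\eta$ of $\eta$ by the Euler field $E=\sum_j x_j\,\partial/\partial x_j$. I expect the one delicate point to be the bookkeeping: tracking the scalar weights $f$ and $\frac{\partial f}{\partial t}$ together with the Laplace-expansion signs, so that the one-$\md t$ block matches the stated form $\md t\wedge\frac{\partial f}{\partial t}f^{d-1}\tau^*(C)$ on the nose. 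This is purely formal, but it is where care is required.

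Finally, the two face identities are immediate corollaries of the shapes of $A$ and $B$, and need no convergence or regularity input beyond $\sigma$ being $C^1$ on open faces. On $\{0,1\}\times\Delta_d$ the coordinate $t$ is constant, so $\md t$ restricts to $0$; since all of $B$ carries a factor $\md t$, it vanishes there, whence $\tau^*(\eta)|_{\{0,1\}\times\Delta_d}=A|_{\{0,1\}\times\Delta_d}$. On a product face $I\times F$ with $F$ a $(d-1)$-face of $\Delta_d$, the form $A$ contains no $\md t$ and its factor $\md\sigma_1\wedge\dots\wedge\md\sigma_d$ is a $d$-form in the $\Delta_d$-directions, whose restriction to the $(d-1)$-dimensional $F$ is zero; hence $A|_{I\times F}=0$ and $\tau^*(\eta)|_{I\times F}=B|_{I\times F}$. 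Everything above is a pointwise identity of continuous forms on the open faces, so no analytic hypothesis enters.
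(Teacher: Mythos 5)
Your route is the same as the paper's: split $\md \tau_j=\frac{\partial f}{\partial t}\sigma_j\md t+f\md\sigma_j$, expand the wedge product by multilinearity, sort the terms by the number of $\md t$-factors, and deduce the face identities from the shapes of $A$ and $B$. The expansion itself and both face arguments are correct and agree with the paper (the paper computes the restriction to $\{0,1\}\times\Delta_d$ by hand, but that is the same observation as your ``$\md t$ restricts to zero'').

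The gap is precisely the step you set aside as ``purely formal'' bookkeeping: identifying the one-$\md t$ block with the stated expression $\md t\wedge \frac{\partial f}{\partial t}f^{d-1}\tau^*(C)$. This identification cannot be made ``on the nose'', because it is false by a factor of $f^d$. Your expansion correctly gives
\[ B=(h\circ\tau)\,\frac{\partial f}{\partial t}\,f^{d-1}\sum_i(-1)^{i-1}\sigma_i\,\md t\wedge\md\sigma_1\wedge\dots\wedge\dot{\md\sigma_i}\wedge\dots\wedge\md\sigma_d, \]
and the alternating sum appearing here is the pull-back of $\sum_i(-1)^{i-1}x_i\,\md x_1\wedge\dots\wedge\dot{\md x_i}\wedge\dots\wedge\md x_d$ along $\sigma$, \emph{not} along $\tau$. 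Pulling $C$ back along $\tau$ sends $x_i\mapsto f\sigma_i$ and, modulo terms killed by the leading $\md t$, $\md x_j\mapsto f\md\sigma_j$, so that
\[ \md t\wedge\tau^*(C)=(h\circ\tau)\,f^{d}\sum_i(-1)^{i-1}\sigma_i\,\md t\wedge\md\sigma_1\wedge\dots\wedge\dot{\md\sigma_i}\wedge\dots\wedge\md\sigma_d; \]
hence $\md t\wedge \frac{\partial f}{\partial t}f^{d-1}\tau^*(C)$ exceeds $B$ by the factor $f^{d}$. The identity that does hold is $B=\md t\wedge\frac{\partial f}{\partial t}f^{-1}\tau^*(C)$ wherever $f\neq 0$, and your own Euler-field remark proves it in one line: $\tau_*\frac{\partial}{\partial t}=\frac{f'}{f}E$ at the point $\tau(t,b)$, so $\iota_{\partial/\partial t}\tau^*\eta=\frac{f'}{f}\tau^*(\iota_E\eta)=\frac{f'}{f}\tau^*(C)$. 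To be fair, the paper's own proof asserts the identification with the words ``as claimed'' and no computation, so the factor slip sits in the statement itself; it is harmless for the intended application, since Proposition~\ref{prop:kegel_stokes} only uses that $B$ equals $\md t$ wedged with a function of $t$ times the slice-wise pull-back of the fixed smooth form $C$, which is true with the corrected factor. But a complete proof of the lemma must either record the corrected exponent ($f^{-1}$ in place of $f^{d-1}$) or replace $\tau^*(C)$ by $(h\circ\tau)$ times the spatial pull-back along $\sigma$; declaring this check to be routine leaves the only genuinely delicate point of the lemma unverified.
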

\begin{proof} Let $\tau=(\tau_1,\dots,\tau_N)$. 
We have $\tau_i=f(t)\sigma_i$ and hence
\[ \md \tau_i=\frac{\partial f}{\partial t}\sigma_i\md t+f(t)\md \sigma_i.\]
This implies
\begin{multline*}
 \md\tau_1\wedge\dots \wedge \md\tau_d\\
=f(t)^d\md\sigma_1\wedge\dots \wedge \md\sigma_d+\sum_i(-1)^{i-1}\frac{\partial f}{\partial t}f(t)^{d-1}\sigma_i \md t\wedge \md\sigma_1\wedge \dots \wedge \md\dot{\sigma_i}\dots \wedge \md\sigma_d
\end{multline*}
where $\dot{\md\sigma}_i$ means that we omit this factor.
We introduce
\[ \omega=\md\sigma_1\wedge\dots\wedge \md\sigma_d,\quad \omega_i=\md\sigma_1\wedge \dots \wedge \md\dot{\sigma_i}\dots \wedge \md\sigma_d.\]
This allows us to write
\[ \md\tau_1\wedge\dots \wedge \md\tau_d=f(t)^d\omega +\sum_i(-1)^{i-1}\frac{\partial f}{\partial t}f(t)^{d-1}\sigma_i\md t\wedge \omega_i\]
and hence 
\begin{align*} \tau^*(\eta) &=(h\circ\tau)\md\tau_1\wedge\dots\md\tau_d\\
 &=(h\circ \tau )\left(f(t)^d\omega +\sum_i(-1)^{i-1}\frac{\partial f}{\partial t}f(t)^{d-1}\sigma_i\md t\wedge \omega_i\right)\\
&=(h\circ \tau )f(t)^d\omega +(h\circ\tau)\sum_i(-1)^{i-1}\frac{\partial f}{\partial t}f(t)^{d-1}\sigma_i\md t\wedge \omega_i
\end{align*}
We define the first summand as $A$ and the second as $B$. We then have
\[ B=\md t\wedge\frac{\partial f}{\partial t}f^{d-1}\tau^*(C)\]
as claimed.

We now restrict to faces. The restriction $\tau^*(\eta|_{\{0\}\times\Delta_d})$ is
\begin{multline*}
 \tau^*(h)|_{\{0\}\times\Delta_d}\md (f(0)\sigma_1)\wedge \dots\wedge \md(f(0)\sigma_d)\\
=\tau^*(h)|_{\{0\}\times\Delta_d}f(0)^d\omega=A|_{\{0\}\times\Delta_d}.
\end{multline*} 
The same computation applies to $t=1$.
Let $F$ be a $(d-1)$-face of $\Delta_d$. Then
\[ A|_{I\times F}=(\tau^*(h)f^d))|_{I\times F}\omega|_F=0\]
because $\omega$ is a $d$-form on a $(d-1)$-dimensional face. 
\end{proof}

\begin{prop}\label{prop:kegel_stokes}
If $\sigma$ satisfies Stokes, then so does $\hat{\sigma}$.
\end{prop}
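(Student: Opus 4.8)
The plan is to push the whole statement onto the homotopy $\bar\sigma$ and there run an explicit Stokes computation anchored in Lemma~\ref{lem:decomp}. I would begin with the finite-volume bookkeeping, since ``satisfies Stokes'' includes finiteness of volume for the simplex and its boundary. As $\sigma$ satisfies Stokes, both $\sigma$ and $\partial\sigma$ have finite volume, so Proposition~\ref{prop:fin_vol} gives finite volume for $\hat\sigma$ and, applied to the individual faces making up $\partial\sigma$, for the cone $\widehat{\partial\sigma}$. Because $\partial\hat\sigma$ consists, up to sign, of $\sigma$ together with the cones over the faces of $\sigma$, it too has finite volume; the corresponding statements for $\bar\sigma$ and $\partial\bar\sigma$ follow through Lemma~\ref{lem:is_int} and the identity $\bar\sigma=\hat\sigma\circ q$. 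With these in place the preceding lemma applies, and it remains only to verify the Stokes \emph{formula} for $\bar\sigma$ on $I\times\Delta_d$.

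For this I would fix a smooth $d$-form $\omega$, reduce by linearity (as in Lemma~\ref{lem:suffice}) to the standard case $\omega=h\,\md x_1\wedge\dots\wedge\md x_d$, and feed $\bar\sigma(t,b)=(1-t)\sigma(b)$ into Lemma~\ref{lem:decomp} with $f(t)=1-t$, obtaining $\bar\sigma^*(\omega)=A+B$ with $A$ purely spatial and $B$ carrying the factor $\md t$. The restriction formulas of that lemma evaluate the boundary integral over $\partial(I\times\Delta_d)$ face by face: the top face $\{1\}\times\Delta_d$ contributes nothing because $f(1)^d=0$ (the cone vertex collapses), the bottom face $\{0\}\times\Delta_d$ contributes exactly $\int_{\Delta_d}\sigma^*(\omega)$ since $f(0)=1$, and on each side face $I\times F$ only $B$ survives.

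It then remains to match the interior integral $\int_{I\times\Delta_d}\bar\sigma^*(\md\omega)$ against these boundary contributions. Here I would integrate out the time variable by Fubini, writing $\sigma_t:=(1-t)\sigma$, so that the spatial slice of $A$ at time $t$ is precisely $\sigma_t^*(\omega)$ and $\Phi(t):=\int_{\Delta_d}\sigma_t^*(\omega)$ has $\Phi(0)=\int_{\Delta_d}\sigma^*(\omega)$ and $\Phi(1)=0$. The derivative $\Phi'(t)=\int_{\Delta_d}\partial_t(\sigma_t^*\omega)$ involves only first derivatives and splits into two pieces: a spatial exterior derivative, which the Stokes hypothesis for the slice $\sigma_t$ --- valid because $\sigma_t$ is $\sigma$ post-composed with the smooth scaling $x\mapsto(1-t)x$ --- turns into a $\partial\Delta_d$-integral reassembling by Fubini into the side-face contribution of $B$, and a second piece whose $t$-integral is the interior integral. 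Since the fundamental theorem of calculus gives $\int_0^1\Phi'(t)\,\md t=\Phi(1)-\Phi(0)=-\int_{\Delta_d}\sigma^*(\omega)$, equating the two expressions and tracking the orientation signs yields the asserted identity.

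The step I expect to be the real obstacle is analytic rather than formal. All integrals here are improper near the boundary of $I\times\Delta_d$, and $\bar\sigma$ is only $C^1$ along open faces, so that $\bar\sigma^*(\omega)$ is merely continuous and one cannot invoke smooth Stokes or $\md\circ\md=0$ directly; the argument must route the spatial direction through the Stokes hypothesis on $\sigma$ and the time direction through the one-dimensional fundamental theorem of calculus. Justifying the interchange of integration (Fubini), the differentiation of $\Phi$ under the integral sign, and the slicewise application of Stokes for $\sigma_t$ all rests on the absolute convergence furnished by the finite-volume conditions established at the outset. The secondary, purely mechanical, difficulty is the careful bookkeeping of orientation signs on the faces of $\partial(I\times\Delta_d)$ and their matching with the pieces $A$ and $B$.
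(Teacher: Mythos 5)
Your proposal is correct and follows essentially the paper's own route: reduction to $\bar\sigma$ via the cone lemma and Proposition~\ref{prop:fin_vol}, the $A+B$ decomposition of Lemma~\ref{lem:decomp}, the fundamental theorem of calculus in the time direction and the slicewise Stokes hypothesis for $\sigma_t=(1-t)\sigma$ in the spatial direction, with Fubini justified by the finite-volume hypotheses. Packaging the time integration as $\Phi(t)=\int_{\Delta_d}\sigma_t^*(\omega)$ instead of proving Stokes' formula separately for $A$ and for $B$ is only a cosmetic reorganisation of the same computation.
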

\begin{proof}
It suffices to consider $\bar{\sigma}$. We write $\md=\md_t+\md_s$ for the decomposition into the time and space derivatives on $I\times\Delta_d$. Every smooth $d$-form on $\R^N$ decomposes as
\[ \sum_I h_Idx_I.\]
It suffices to consider each summand separately. Without loss of generality it
suffices to verify the formula for 
\[ \eta=h\md x_1\wedge\dots \wedge \md x_d\]
with smooth $h$. We apply the last lemma to $f(t)=(1-t)$ and $\tau=\bar{\sigma}$. Let $A$, $B$, $C$ be as defined there.

We claim that
\[ \int_{I\times \Delta_d}\md\tau^*(\eta)=\int_{\partial (I\times\Delta_d)}\tau^*(\eta).\]
The boundary chain satisfies
\[ \partial(I\times\Delta_d)=(\partial I)\times \Delta_d+ I\times (\partial \Delta_d).\]
The decomposition $\tau^*(\eta)=A+B$ in Lemma~\ref{lem:decomp} was constructed in a way that
$A$ vanishes on the boundary components in $I\times (\partial\Delta_d)$ and
$B$ vanishes on the boundary components in $(\partial I)\times \Delta_d$.  
It suffices to show that
\[ \int_{I\times\Delta_d}\md A=\int_{(\partial I)\times\Delta_d}A\]
and
\[ \int_{I\times\Delta_d}\md B=\int_{I\times(\partial \Delta_d)}B,\]
i.e., Stokes' formula holds for $A$ and $B$.

We have
\[ \md A=\md_t(\tau^*(h)\cdot f^d)\wedge\omega=\frac{\partial (\tau^*(h) \cdot f^d)}{\partial t}\md t\wedge \omega.\]
The partial derivative
is continuous on $I\times\Delta_d$ because $f$ and $h$ are  $C^1$ (actually smooth) and 
$\frac{\partial \tau_i}{\partial t}=\frac{\partial f}{\partial t}\sigma_i$ is continuous.  (Note that $\sigma_i$ does not depend on $t$! This is the decisive step in the proof.)  As $\sigma$ has finite volume, $\omega$ is integrable. Moreover, it is independent of $t$. This makes $\md t\wedge \omega$ and then $dA$ integrable
and we may apply Fubini.
We first integrate in the time direction, then in the spatial direction. 
By the fundamental theorem of calculus we may evaluate 
\begin{align*} \int_{I\times\Delta_d}\md A
&=\int_{\Delta_d}\int_I d_t(\tau^*(h)f^d)\wedge\omega\\
 & =\int_{\Delta_d}(\tau^*(h) f^d)|_{\{1\}\times\Delta_d}\omega-
      \int_{\Delta_d}(\tau^*(h)f^d)|_{\{0\}\times\Delta_d}\omega\\
&=\int_{(\partial I)\times \Delta_d} A.
\end{align*}
We also have
\[ B=\md t\wedge \frac{\partial f}{\partial t}f^{d-1}\tau^*(C), \quad \md B=\md t\wedge \frac{\partial f}{\partial t}f^{d-1}\md_s\tau^*(C).\]
Again we may apply Fubini because $\md B=\md\tau^*(\eta)-\md A$ is integrable. This time we take the integral in the spatial direction first. The differential form  $C$ on $\R^N$ is smooth. By assumption, $\sigma$ satisfies Stokes, hence so does $\tau=f(t)\sigma$ for fixed $t$. This gives
\[ \int_{\{t\}\times\Delta_d} \md_s\tau^*(C)
=\int_{\{t\}\times\partial \Delta_d}\tau^*(C)\]
and from this
\begin{multline*} 
\int_{I\times \Delta_d}\md B=\int_I \frac{\partial f}{\partial t}f^{d-1}
\left(  
\int_{\{t\}\times\Delta_d}\tau^*(\md_sC)\right)\md t  
\\
=\int_I\frac{\partial f}{\partial t}f^{d-1}\left(\int_{\{t\}\partial\Delta_d}\tau^*(C)\right)\md t=
\int_{I\times (\partial \Delta_d)}B.
\end{multline*}
\end{proof}

\section{Application to manifolds}\label{sec:manifolds}
While most readers will only be interested in the case of manifolds, the application to exponential periods in \cite{expper} involves manifolds with corners. We go through the definitions to clarify the terminology.

\begin{defn}\label{defn:diff}
Let $V\subset \R_{\geq 0}^N$ be an open subset, $\infty\geq p\geq 0$. A function
$f:V\to\R$ is called \emph{differentiable of class $C^p$} if it extends to a
$C^p$-function on an open neighbourhood of $V$ in $\R^{N}$.
\end{defn}
\begin{rem}
If $f$ is differentiable in the above sense, then all (higher) partial derivatives up to degree $p$ exist and are continuous on $V$, including on its boundary. They do not depend on the extension. Conversely, if the partial derivatives exist on $V$, then the function is differentiable by the \emph{Whitney extension problem} solved for compact sets by Feffermann in \cite{fefferman}. We do not need these subtleties.
\end{rem}

\begin{defn}\label{defn:manifold}
Let $X$ be a second countable Hausdorff space, $\infty\geq p\geq 1$, $N\geq 0$. A \emph{chart} $(U,\phi)$ of dimension $N$ on $X$ is a homeomorphism
\[ \phi:U\to V\]
with $U\subset X$ and $V\subset \R_{\geq 0}^N$ open subsets. Two charts $(U_1,\phi_1)$ and $(U_2,\phi_2)$ are \emph{compatible} if the transition map
\[ \phi_{12}=\phi_2^{-1}\circ \phi_1: \phi_1( U_1\cap U_2)\to \phi_2(U_1\cap U_2)\subset\R^{N}\]
is differentiable of class $C^p$ in the sense of Definition~\ref{defn:diff}. An \emph{atlas} is a set of compatible charts $(U_i,\phi_i)_{i\in I}$ such that $X=\bigcup_{i\in I}U_i$. The choice of an atlas defines the structure of a \emph{$C^p$-manifold with corners} on $X$.
\end{defn}

\begin{ex} The standard simplex $\Delta_d$ and $[0,1]\times\Delta_d$ are manifolds with corners.
\end{ex}

As usual, a continuous map between $C^p$-manifolds with corners is defined to be $C^p$ if the induced maps on charts are $C^p$ in the sense of Definition~\ref{defn:diff}. A special case are differential forms as sections of the cotangent bundle.

\begin{defn}Let $X$ be an $N$-dimensional $C^1$-manifold with corners, $\sigma:\Delta_d\to X$ continuous and $C^1$ on all open faces. We say that $\sigma$ has
\emph{finite volume} if there is a subdivision of $\sigma$ such that the
subsimplices map to a single chart each and have finite volume in the sense of Definition~\ref{defn:fvol}.
\end{defn}

\begin{lemma}The condition of having finite volume is independent of the choice of subdivision
and charts.
\end{lemma}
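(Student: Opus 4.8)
The plan is to reduce everything to two elementary measure-theoretic facts — that $L^1$-integrability is preserved under restriction to a measurable subset and is additive over finite decompositions — together with a chart-change estimate that is formally identical to the proof of Lemma~\ref{lem:suffice}.

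First I would reduce the comparison of two subdivisions to the comparison of a subdivision with a refinement. Given two subdivisions $\mathcal{D}$ and $\mathcal{D}'$, each with subsimplices mapping into a single chart, I would pass to a common refinement $\mathcal{D}''$; every subsimplex $\tau$ of $\mathcal{D}''$ is then contained in a subsimplex $P\in\mathcal{D}$ and in a subsimplex $P'\in\mathcal{D}'$, so in particular $\tau$ still maps into a single chart. Since distinct subsimplices of a subdivision overlap only along lower-dimensional faces, which are null sets in dimension $d$, the integral of any pulled-back $d$-form over a subsimplex equals the sum of the integrals over the pieces of any refinement. Thus finite volume for $\mathcal{D}$ will pass down to $\mathcal{D}''$ by restriction (locality), and back up to $\mathcal{D}'$ by summation (additivity), \emph{provided} that finite volume of a single subsimplex is itself independent of which admissible chart is used.

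The remaining point, and the only genuinely analytic one, is this chart-independence on an overlap. Suppose a subsimplex $\tau\colon P\to U_1\cap U_2$ has finite volume when read in the chart $(U_1,\phi_1)$. Writing $\psi=\phi_2\circ\phi_1^{-1}$ for the transition map, I would use $(\phi_2\circ\tau)^*(\md x_J)=(\phi_1\circ\tau)^*(\psi^*\md x_J)$ and expand $\psi^*\md x_J=\sum_I c_{JI}\,\md x_I$, where the coefficients $c_{JI}$ are the $d\times d$ minors of the Jacobian of $\psi$. By Definition~\ref{defn:diff} the transition map is $C^1$ up to the corners, so the $c_{JI}$ are continuous; as $\phi_1(\tau(P))$ is compact they are bounded there. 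Each $(\phi_1\circ\tau)^*(\md x_I)$ is integrable by hypothesis, and multiplying an integrable $d$-form by the bounded continuous function $c_{JI}\circ\phi_1\circ\tau$ keeps it integrable — this is exactly the computation in the proof of Lemma~\ref{lem:suffice}. Summing over $I$ shows $(\phi_2\circ\tau)^*(\md x_J)$ is integrable, so finite volume holds in $(U_2,\phi_2)$ as well; by symmetry the two conditions coincide.

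I expect the chart-change step to be the crux, since it is where the $C^1$-regularity up to the corners (via Definition~\ref{defn:diff}) and the boundedness of the Jacobian minors on the compact image $\phi_1(\tau(P))$ enter. By contrast the subdivision bookkeeping — existence of a common refinement, nullity of the overlap faces, and additivity of the integral — is routine, as long as one tracks carefully which subsimplex of the refinement lands in which chart.
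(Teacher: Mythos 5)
Your proposal is correct and follows essentially the same route as the paper: the crux is the chart-change step, where the paper notes that the $C^1$ transition map pulls smooth forms back to continuous forms and then invokes the definition of finite volume, which is exactly what your expansion into Jacobian minors with bounded continuous coefficients (the computation of Lemma~\ref{lem:suffice}) establishes concretely. The subdivision bookkeeping via a common refinement that you spell out is the part the paper simply declares obvious, so no discrepancy there either.
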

\begin{proof}Independence of the subdivision is obvious. Assume without loss of generality that $\sigma(\Delta_d)$ is contained in two charts
$\phi_i:U_i\to V_i\subset\R^N$ with $i=1,2$. Let $\phi_{12}=\phi_2\circ\phi_1^{-1}$ be the transition map where it is defined. By assumption it is $C^1$. 

Suppose that $\phi_1\circ\sigma$ has finite volume. We have to check that $\phi_2\circ\sigma$ has finite volume. By Remark~\ref{rem:sm_suffice} it suffices to consider 
\[ \sigma^*\phi_2^*(\omega)=\sigma^*\phi_1^*\phi_{12}^*(\omega)\]
for all smooth forms $\omega$ on $V_2$. The pull-back $\phi_{12}^*(\omega) $ is
a $C^0$-form on $V_1$. By assumption, its pull-back via $\phi_1\circ\sigma$ is integrable.
\end{proof}

If $\sigma$ has finite volume, the integral
\[ \int_{\Delta_d}\sigma^*(\omega)\]
is well-defined for all continuous $d$-forms on $X$. We can now replace $\R^N$ by
a $C^1$-manifold with corners $X$ in Definition~\ref{defn:stokes}. This defines what it means for $\sigma$ to \emph{satisfy Stokes}. The condition can be tested after passing to a subdivision of $\Delta_d$.

\begin{thm}\label{thm:fin_vol}
Let $X$ be a $C^1$-manifold with corners. Then the following complexes compute
singular homology of $X$:
\begin{itemize}
\item
the complex $S^\fin_\bullet(X)$ of formal linear combinations of singular simplices $\sigma$ such that all faces have finite volume;
\item the complex $S^\stokes_\bullet(X)$ of formal linear combinations of singular simplices $\sigma$ 
such that all faces satisfy Stokes.
\end{itemize}
\end{thm}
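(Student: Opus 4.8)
The plan is to prove that both inclusions $S^\stokes_\bullet(X)\hookrightarrow S_\bullet(X)$ and $S^\fin_\bullet(X)\hookrightarrow S_\bullet(X)$ into the ordinary singular chain complex induce isomorphisms on homology. First observe that these are genuinely subcomplexes: the conditions in Theorem~\ref{thm:fin_vol} are imposed on \emph{all} faces of a simplex, hence are stable under $\partial$, and they contain the smooth singular complex $S^\infty_\bullet(X)$, since a globally smooth simplex is $C^1$ on all open faces, has finite volume, and obeys the classical Stokes formula. The argument below is the same in both cases; the only input that changes is Proposition~\ref{prop:fin_vol} in the finite-volume case versus Proposition~\ref{prop:kegel_stokes} in the Stokes case. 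I would structure it as the usual local-to-global comparison: prove that the subcomplex is acyclic on a basis of charts, then globalise.

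For local acyclicity, cover $X$ by charts $\phi\colon U\to V$ with $V\subseteq\R^N_{\geq 0}$ convex; such $V$ and their finite intersections form a basis closed under intersection. Fixing a point $p\in V$, let $P$ be the affine cone operator onto $p$, i.e.\ the construction $\hat\sigma$ of Section~\ref{sec:set-up} transported by the translation sending $p$ to the origin. By convexity $P$ carries chains supported in $V$ to chains supported in $V$, it lands again in our class of continuous, face-wise $C^1$ simplices, and by Proposition~\ref{prop:fin_vol} (resp.\ Proposition~\ref{prop:kegel_stokes}) it preserves the subcomplex. Since $P$ is the classical cone contraction, with $\partial P+P\partial=\mathrm{id}$ in positive degrees and $\partial P=\mathrm{id}-\varepsilon$ in degree $0$, it shows that $S^\fin_\bullet(V)$ and $S^\stokes_\bullet(V)$ are acyclic with $H_0=\Z$. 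This matches the singular homology of the contractible $V$, so on each $V$ (and on the convex intersections) the inclusion into $S_\bullet(V)$ is a homology isomorphism; in particular the passage from continuous to face-wise $C^1$ simplices is absorbed at this local, contractible stage.

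To globalise I would run the standard Mayer--Vietoris / small-simplices machinery simultaneously for the subcomplex and for $S_\bullet$, comparing the two exact sequences by the five lemma and inducting over the convex basis (with a directed-colimit argument for non-compact $X$, which is second countable by Definition~\ref{defn:manifold}). Concretely, this identifies $H_\bullet(S^\fin_\bullet(X))$ and $H_\bullet(S^\stokes_\bullet(X))$ with singular homology via the inclusion.

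The main obstacle is carrying the globalisation out \emph{inside} the subcomplexes. The cone only contracts chains locally, so to compare global cycles one must be able to make chains small without leaving $S^\fin_\bullet$ or $S^\stokes_\bullet$; equivalently, barycentric subdivision and its accompanying chain homotopy must preserve the conditions of Definitions~\ref{defn:fvol} and~\ref{defn:stokes}. These operators are built from the cone and from precomposition with the affine maps $\alpha$ onto the simplices of the subdivision, so stability reduces to the two cited propositions together with invariance under affine reparametrisation. For finite volume this invariance is immediate from $(\sigma\circ\alpha)^*\omega=\alpha^*\sigma^*\omega$ and the fact that $\alpha$ sends each open face into a single open face of $\Delta_d$. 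The delicate point, which I expect to cost the most work, is that the Stokes property survives such reparametrisation; this has to be extracted from a change-of-variables argument on the subsimplices, in the spirit of Lemma~\ref{lem:is_int}.
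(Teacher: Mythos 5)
Your globalisation route is genuinely different from the paper's: you stay on the chain level and argue by Mayer--Vietoris, small simplices and the five lemma, whereas the paper dualises to cochains, sheafifies the presheaves $U\mapsto S_?^\bullet(U)$, shows the resulting sheaves are fine and resolve $\Z$, and then compares presheaf sections with sheaf sections following Warner. The local input is identical in both schemes (the cone contraction on convex charts, legitimised by Propositions~\ref{prop:fin_vol} and~\ref{prop:kegel_stokes}), and so is the second, gluing-stage input: both arguments need that barycentric subdivision and its chain homotopy preserve $S^\fin_\bullet$ and $S^\stokes_\bullet$. You correctly isolate this as the main obstacle; the paper deals with it in a single sentence at the end of its proof.

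The genuine gap is your claim that the finite-volume half of this stability is ``immediate''. Membership in $S^\fin_\bullet$ requires finite volume of \emph{all} iterated faces, and subdivision creates new faces that cut through the interior of $\Delta_d$. For such a face, finite volume is a $(d-1)$-dimensional integrability condition on the restriction of $\sigma$ to a slice, and this is not implied by integrability of the $d$-form pullbacks of $\sigma$, nor by the conditions on the original faces; your identity $(\sigma\circ\alpha)^*\omega=\alpha^*\sigma^*\omega$ only handles the top-dimensional subsimplices, where one restricts an integrable $d$-form to a subset. In fact the needed statement fails in general. Take $d=2$ and $\sigma=(x,y,g):\Delta_2\to\R^3$, where, in polar coordinates $(r,\theta)$ at the vertex $0$, $g=\chi(r)\,\psi\bigl((\theta-\theta_0)/r\bigr)\,r\sin(1/r)$, with $\psi$ a smooth bump function satisfying $\psi(0)=1$ and $\mathrm{supp}\,\psi\subset[-1/2,1/2]$, $\chi$ a smooth cutoff equal to $1$ near $r=0$ and supported in $r<1/2$, and $\theta_0$ the direction from $0$ to the barycentre. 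Then $g$ vanishes outside a sector of angular width $r$ around $\theta_0$ (in particular on all three edges of $\Delta_2$), $\sigma$ is continuous and smooth away from the vertex, and $|\nabla g|=O(1/r)$ on that sector, so all pullbacks $\sigma^*(\md x_I)$ of $2$-forms are absolutely integrable: $\sigma$ and all its faces have finite volume, i.e.\ $\sigma\in S^\fin_2(\R^3)$. But the restriction of $\sigma$ to the barycentric edge from $0$ to the barycentre pulls $\md z$ back to $\bigl(\sin(1/r)-(1/r)\cos(1/r)\bigr)\md r$, which is not absolutely integrable. The two subdivision triangles adjacent to this edge therefore each have a face of infinite volume, and the subdivided chain does not lie in $S^\fin_2(\R^3)$.

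So the affine-reparametrisation step breaks down for finite volume just as much as for Stokes (where you at least announce that work remains), and it cannot be rescued by the change-of-variables lemmas you cite, which again control only top-dimensional integrals. Any completion of your plan must handle the interior faces produced by subdivision, e.g.\ by modifying the class of simplices, or the subdivision operator (a generically chosen apex plus a Fubini argument is a natural attempt, but even that requires integrability of lower-degree pullbacks, which Definition~\ref{defn:fvol} does not supply). Be aware that this issue is not specific to your route: the paper's appeal to Warner rests on the very same one-line subdivision claim, so what you labelled the delicate point is indeed the real content of the gluing step in either approach --- but your proposed justification of it is not correct.
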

\begin{proof}Singular homology is defined as homology of the complex of all formal linear combinations of \emph{singular simplices}, i.e., continuous maps
$\sigma:\Delta_d\to X$ for $d\geq 0$.
The conditions on $S^\fin_\bullet(X)$ and $S^\stokes_\bullet(X)$ are stable under the boundary map, so we get well-defined subcomplexes. 

To see that the subcomplexes $S^\fin_\bullet(X)$ and $S^\stokes_\bullet(X)$ compute singular homology, we go through the argument in \cite[Section~5.31]{warner}. We sketch the argument for the convenience of the reader. We write $S^?_\bullet(X)$ for $?=\fin,\stokes$
 and $S_?^\bullet(X)$ for their $\Z$-duals. The assignment
\[ U\mapsto S_?^\bullet(U)\]
for $U\subset X$ open defines a complex of presheaves on $X$. Let $\Sh_?^\bullet$ be its sheafification. The argument of \cite[p.193/194]{warner} applies verbatim to show that each $\Sh^d_?$ is a fine sheaf. (The proof via \cite[5.22]{warner} uses paracompactness of $X$, not properties of the simplices.) 

In order to show that it is even a resolution of the constant sheaf $\Z$, it suffices to show that $S^?_d(U)$ is contractible in the special case where $U\subset\R_{\geq 0}^a\times\R^b$ is a ball centered at $0$. Warner achieves this by a simplicial homotopy in \cite[p.~194]{warner}. It is given by $\sigma\mapsto\hat{\sigma}$. Note that $\hat{\sigma}$ is contained in $U$ because $U$ is convex (this is the only point where the corners enter the argument). The faces of $\hat{\sigma}$ are $\sigma$ itself
and faces of the form $\hat{\tau}$ for a face $\tau$ of $\sigma$. In the case $?=\fin$, they all have finite volume by Proposition~\ref{prop:fin_vol}, making the homotopy well-defined. In the case $?=\stokes$, we use Proposition~\ref{prop:kegel_stokes} to the same end.

In both cases,
\[ \Z\to \Sh^\bullet_?\]
is now a fine resolution, hence
\[ H^i(X,\Z)=H^i(\Sh_?^\bullet(X)).\]
It remains to show that the canonical map
\[ S^\bullet_?(X)\to \Sh_?^\bullet(X)\]
comparing the sections of the presheaf and its sheafification
is a quasi-isomorphism. The argument of \cite[5.23]{warner} applies verbatim, once we note  that the barycentric subdivision of a simplex of finite volume or satisfying Stokes has the same property.
\end{proof}


\section{The tame case}\label{sec:tame}

We fix an o-minimal structure on the real field. By \emph{definable} we always mean definable with parameters in a fixed subfield $k$ of $\R$. 
This means that we have chosen a system of \emph{definable} subsets of $\R^n$  (with parameters in $k$) for all $n$, satisfying certain axioms, see \cite[Chapter~1,~Section~3]{D:oMin}.  An example is the theory of semi-algebraic sets of $\R^N$ for $N\geq 0$ defined over $k$. Our discussion was chosen to apply to this case.

In \cite[Chapter~3]{expper}, we study the notion of a definable $C^p$-manifold (with corners) for $\infty\geq p\geq 0$ and some basics of integration theory for differentiable forms. They are manifolds with corners in the sense of Definition~\ref{defn:manifold} with a finite atlas and definable transition maps.

\begin{rem} We include corners in the discussion because this is needed in the application to exponential periods, but this is not a critical point.
\end{rem}

\begin{rem}\label{rem:embedd_corners}
Kawakami proved that a definable $C^p$-manifold can be embedded into
$\R^N$ as a definable $C^p$-manifold, see \cite{kawakami}. It is likely that this extends to manifolds with boundary or corners. 
\end{rem}

%

A definable $C^p$-manifold with corners $X$ has a tangent and cotangent bundle in the category of definable $C^{p-1}$-manifolds with corners. For a definable subset $G\subset X$ a \emph{continuous $d$-form} $\omega$ is continuous section of $\Lambda^dT^*X$ over $G$, see \cite[Definition~3.11]{expper}.

Let $X$ be a definable manifold with corners and $G\subset X$ a definable subset of dimension $d$. We denote by $\reg_d(G)$ the locus where $G$ is a $C^p$-submanifold of $X$. For $p<\infty$, the subset is definable by  \cite[2.2]{vdDMiller:96}. The Cell Decomposition Theorem immediately implies that the complement has dimension strictly less than $d$, see also \cite[Lemma~3.8]{expper}.

A \emph{pseudo-orientation} on $G$ is the choice of an equivalence class of the data of a definable open subset $U\subset \reg_d(G)$ such that $\dim (G-U)<\dim G$ together with an orientation on $U$, see \cite[Definition~3.14]{expper}. Two such data are equivalent if they
agree on the intersection of the open sets.
 A \emph{pseudo-oriented definable set} is a definable set together with the choice of a pseudo-orientation. If $\omega$ is a continuous $d$-form
on a pseudo-oriented definable $G$, then we can define
\begin{equation}\label{eq:int} \int_G\omega:=\int_U\omega.\end{equation}
The value only depends on the equivalence class. 
\begin{rem} The notion can be extended to manifolds of class $C^\infty$.
A pseudo-orientation is defined as an equivalence class of an orientation on $U\subset \reg_d(G)$ as above for finite (and varying) degrees of differentiability.
Mutatis mutandis all results below still apply.
\end{rem}

\begin{prop}[{\cite[Theorem~3.23]{expper}}]\label{prop:converge}
If $G$ is compact, $\omega$ a continuous $d$-form, then the integral \eqref{eq:int} is absolutely convergent.
\end{prop}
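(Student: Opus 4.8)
The plan is to reduce the convergence to the integrability of a \emph{continuous} $d$-form over a single compact simplex, where it is immediate, by triangulating $G$ in a way that is of class $C^1$ up to the boundary of every simplex. Since $X$ has a finite atlas and $G$ is compact, I would first cover $G$ by finitely many charts and, after refining the triangulation, assume each simplex lands in a single chart; so it suffices to treat $G\subset\R_{\geq 0}^N$ compact definable of dimension $d$ with $\omega$ a continuous $d$-form. Absolute convergence does not see the pseudo-orientation, so I only need to bound $\int_U|\omega|$ for one admissible choice of $U$ in \eqref{eq:int}.

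First I would apply the $C^1$-triangulation of Czap{\l}a--Paw{\l}ucki \cite{omin-triang}, chosen compatible with the definable subset $\reg_d(G)$: this yields a finite simplicial complex $K$ and a definable homeomorphism $\Phi\colon|K|\to G$ whose restriction to each closed simplex is of class $C^1$, and for which $\reg_d(G)$ is a union of images of open simplices. Every open $d$-simplex $\sigma^\circ$ maps to a $d$-dimensional subset, which therefore cannot lie in the singular part $G\ssm\reg_d(G)$ (of dimension $<d$); by compatibility its image lies in $\reg_d(G)$, while the images of the lower-dimensional simplices are null for the $d$-dimensional measure. Taking $U$ to be the union of the top-dimensional images, we get $\dim(G\ssm U)<d$ and
\[ \int_U|\omega|=\sum_{\dim\sigma=d}\int_{\sigma^\circ}|\Phi^*\omega|,\]
each summand being the usual pull-back expression for the $C^1$ map $\Phi|_{\sigma^\circ}$ (no immersion hypothesis is needed, as $\Phi$ is injective and $\omega$ is of top degree).

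The estimate is then trivial: on each closed $d$-simplex $\bar\sigma\cong\Delta_d$ the pull-back $\Phi^*\omega=g\,\md b_1\wedge\dots\wedge\md b_d$ has continuous coefficient $g$, because the entries of the Jacobian of $\Phi$ extend continuously to $\bar\sigma$ (this is where global $C^1$-regularity enters) and $\omega$ is continuous on $G\supset\Phi(\bar\sigma)$. A continuous function on the compact $\bar\sigma$ is bounded, so $\int_{\sigma^\circ}|\Phi^*\omega|\leq\bigl(\sup_{\bar\sigma}|g|\bigr)\vol(\Delta_d)<\infty$, and summing the finitely many top simplices gives the claim.

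I expect the only real obstacle to be the \emph{global} $C^1$-regularity of the triangulation; once it is available the analysis collapses, precisely because $C^1$-ness up to each face keeps the Jacobian, and hence $g$, bounded near the boundary. A bare cell decomposition would only furnish a parametrisation that is $C^1$ on the open cells, where the Jacobian may blow up at the boundary (typically like a negative power of the distance to a face), so that one would instead have to prove convergence of a genuinely singular integral. Invoking \cite{omin-triang} is exactly what removes this difficulty; the remaining points—compatibility of the triangulation with $\reg_d(G)$ and with the charts, obtained by refinement, and the change-of-variables identity across the $C^1$ maps on open top simplices—are routine.
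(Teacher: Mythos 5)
Your proposal is correct and is essentially the paper's own argument: the proof of Proposition~\ref{prop:converge} explicitly gives, as an alternative to the projection argument of \cite{expper}, exactly this route --- reduce to the affine case, apply the strict $C^1$-triangulation of \cite{omin-triang} compatible with the complement of $U$, and conclude because the pull-back of $\omega$ along each globally $C^1$ simplex is continuous on a compact set, hence integrable. The only cosmetic differences are that the paper performs the reduction to charts by a definable partition of unity (or inclusion/exclusion) rather than by refining a triangulation, and that it leaves implicit the change-of-variables and null-set bookkeeping that you spell out.
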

\begin{proof} 
For the convenience of the reader, we sketch the argument of \cite{expper} as well as an alternative argument using the existence of strict $C^1$-trian\-gu\-la\-tions.

By a definable partition of unity subordinate to the atlas (or an inclusion/exclusion argument as in the proof of \cite[Theorem~3.23]{expper}) for a cover of $G$ by open balls whose closure is contained in a coordinate chart, we reduce to the affine case, i.e., $G\subset \R^N$.

The differential form $\omega$ is of the form $\sum_Ia_i\md x_I$ where the sum is with respect to all multi-indices of length $d$.  It suffices to consider each summand separately. As $G$ is compact and $a_I$ is continuous, the function is bounded. It suffices to establish absolute convergence for all $\md x_I$. Without loss of
generality $\omega=\md x_1\wedge \dots\wedge \md x_d$. From now on compactness of $G$ no longer matters and we replace $G$ by an oriented definable open subset representing the pseudo-orientation. In particular it is a $C^p$-manifold.

Let $\pi:\R^N\to \R^d$ be the projection to the first $d$ coordinates. Note
that $\omega=\pi^*(\md y_1\wedge\dots\wedge\md y_d)$.
By \cite[Lemma~3.9]{expper} the definable set $G$ decomposes (up to a set of smaller dimension) into definable open connected submanifolds $G_0,G_1,\dots,G_n$ such that $\pi$ induces a diffeomorphism $G_i\to \pi(G_i)\subset\R^d$ for $i=1,\dots,n$ and $\pi$ has positive fibre dimension over $\pi(G_0)$. This implies
$\omega|_{G_0}=0$, so that $G_0$ does not contribute to the integral. 
We also have $\int_{G_i}\omega=\vol(\pi(G_i))$. The latter is finite because
$G_i$ is relatively compact.

Alternatively, we use the same reduction to the affine case as above and then apply \cite[Main~Theorem]{omin-triang} to $G\subset \R^N$ and the closed
subset complementary to the set $U$ on which the orientation is defined. This yields a strict $C^1$-triangulation of $G$, in particular, all $\sigma:\Delta_d\to G$ are $C^1$ as maps to $\R^N$. 
Then the pull-back $\sigma^*(\omega)$ is $C^0$, and hence
\[ \int_{\sigma(\Delta_d)}\omega=\int_{\Delta_d}\sigma^*(\omega)\]
is finite.
\end{proof}

\begin{rem}
This is very similar to the arguments in \cite[(1.13)]{hanamura_et_al_I} in the semi-algebraic case.
\end{rem}

We now show that standard properties of integration extend to the pseudo-oriented case. Our first aim is the formula for change of variables.

\begin{defn}
Let $X_1,X_2$ be definable $C^p$-manifolds with corners for $1\leq p<\infty$, $G_1\subset X_1$ a definable subset of dimension $d$. Let $f:G_1\to X_2$ be a continuous definable map. We put
\[ \reg(f)=\left\{x\in \reg_d(G_1)| \text{$f$ is $C^p$ in a neighbourhood of $x$}\right\}.\]
\end{defn}
\begin{lemma}
The subset $\reg(f)\subset G_1$ is open, definable and $\dim(G_1-\reg(f))<d$.
\end{lemma}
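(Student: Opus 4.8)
The goal is to show that
\[
\reg(f)=\left\{x\in\reg_d(G_1)\mid \text{$f$ is $C^p$ near $x$}\right\}
\]
is open and definable, with small complement. I would treat the three assertions in order of increasing difficulty, peeling off the easy topological claim first.

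\textbf{Openness.} This is essentially a formal consequence of the definition. Being $C^p$ in a neighbourhood is, by construction, an open condition: if $f$ is $C^p$ on an open neighbourhood $W$ of $x$ inside $\reg_d(G_1)$, then every point of $W$ also has such a neighbourhood, so $W\subset\reg(f)$. Hence $\reg(f)$ is open as a subset of $\reg_d(G_1)$; since $\reg_d(G_1)$ is itself open in $G_1$ by definition, $\reg(f)$ is open in $G_1$. I would phrase this in one or two sentences.

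\textbf{Definability.} This is the technical heart of the argument, and I expect it to be the main obstacle. The subtlety is that ``$f$ is $C^p$ near $x$'' is an analytic, not a priori first-order, condition, so one cannot simply quote closure of definable sets under Boolean operations and projections. The standard route is to reduce, via the charts of the finite definable atlas, to the affine situation $G_1\subset\R^N$ and $f:G_1\to\R^M$ definable. On the open manifold locus $\reg_d(G_1)$ one can work in local definable coordinates and express the existence and continuity of the partial derivatives up to order $p$ through definable data. The clean way is to invoke the already-cited result \cite[2.2]{vdDMiller:96}: the set where a definable function is $C^p$ is definable, exactly as was used above for $\reg_d(G)$. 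Since $f$ and the coordinate projections are definable and $p<\infty$ is finite (so only finitely many derivatives are involved), the locus where all these derivatives exist and are continuous is definable, and intersecting with the definable set $\reg_d(G_1)$ yields definability of $\reg(f)$. I would present this as a reduction to charts followed by a single citation of \cite[2.2]{vdDMiller:96}, mirroring the treatment of $\reg_d$.

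\textbf{Small complement.} Finally, to bound $\dim(G_1-\reg(f))$, note that $G_1-\reg(f)=(G_1-\reg_d(G_1))\cup(\reg_d(G_1)-\reg(f))$. The first set has dimension $<d$ by the Cell Decomposition Theorem, as recalled in the excerpt for $\reg_d$. For the second, I would apply the $C^p$ Cell Decomposition Theorem to the definable function $f$ restricted to $\reg_d(G_1)$: away from a definable subset of dimension $<d$, $f$ is $C^p$, so that subset contains $\reg_d(G_1)-\reg(f)$. Taking the union gives $\dim(G_1-\reg(f))<d$. The only point requiring care is keeping track that both exceptional loci are genuinely of lower dimension after the chart reduction, which is immediate since the charts are definable homeomorphisms preserving dimension.
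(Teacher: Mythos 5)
Your proof is correct and follows essentially the same route as the paper: openness is formal, definability is a citation to van den Dries--Miller, and the dimension bound splits $G_1-\reg(f)$ into the non-manifold locus (handled by \cite[Lemma~3.8]{expper}) and the non-$C^p$ locus inside $\reg_d(G_1)$ (handled by $C^p$ cell decomposition, which is exactly \cite[C.2]{vdDMiller:96} in the paper). The only slip is the citation for definability: \cite[2.2]{vdDMiller:96} concerns the locus where a definable \emph{set} is a $C^p$ submanifold, whereas the definability of the locus where a definable \emph{function} is $C^p$ is \cite[B.7]{vdDMiller:96}, which is what the paper invokes.
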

\begin{proof} All properties can be checked in charts.
The condition is open. By \cite[B.7]{vdDMiller:96} the set is
definable. It remains to check the dimension property. By \cite[Lemma~3.8]{expper}, the set $G_1-\reg_d(G_1)$ has dimension smaller than $d$. We replace $G_1$ by
$\reg_d(G_1)$.
By \cite[C.2]{vdDMiller:96} there is a union $V$ of disjoint open $C^p$-cells in $G_1$ such that $\dim(G_1-V)<d$ and $f|_V$ is $C^p$.
\end{proof}

\begin{defn}Let $G_1\subset X_1$ and $G_2\subset X_2$ be pseudo-oriented definable subsets of dimension $d$ in definable $C^p$-manifolds with corners for $1\leq p<\infty$. 
 A continuous definable map
$f:G_1\to G_2$ is \emph{compatible with orientations} if there are representatives of the pseudo-orientations on $U_1\subset G_1$ and $U_2\subset G_2$ and a definable open subset $U\subset U_1\cap \reg(f)\cap f^{-1}(U_2)$ such that the map 
$U\to U_2$ is orientation preserving and $\dim(f(G_1-U))<d$.
\end{defn}
\begin{rem} Note that $G_2$ is not a manifold.
Here $\reg(f)$ refers to the regularity locus of the composition $G_1\to G_2\to X_2$. On the set $U_1\cap f^{-1}(U_2)\cap\reg(f)$, the induced map is $C^p$. By admitting the smaller set $U$, the notion becomes independent of the choice of representative for the pseudo-orientations.
\end{rem}

\begin{prop} Let $1\leq p<\infty$. Let $f:G_1\to G_2$ be a continuous definable map between
pseudo-oriented definable subsets of definable $C^p$-manifolds with corners. Then the change of variables formula holds, i.e., for any continuous differential form $\omega$ on
$G_2$, we have
\[ \int_{G_1}f^*(\omega)=\int_{f(G_1)}\omega\]
where $f(G_1)$ is pseudo-oriented as a subset of $G_2$. In particular, one side of the equation is absolutely convergent if and only if the other is.
\end{prop}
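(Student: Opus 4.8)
The plan is to reduce the change of variables formula to the classical case on honest manifolds by peeling away, in both source and target, the lower-dimensional loci where either the pseudo-orientation, the regularity of $f$, or the submanifold structure fails. Since all these bad loci are definable of dimension $<d$, they contribute nothing to either integral, and on the remaining open pieces the map $f$ is a genuine $C^p$-map between oriented $C^p$-manifolds.

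First I would fix representatives of the pseudo-orientations on $U_1\subset G_1$ and $U_2\subset G_2$ and, using the definition of compatibility with orientations, choose a definable open $U\subset U_1\cap\reg(f)\cap f^{-1}(U_2)$ on which $f$ restricts to an orientation-preserving $C^p$-map $U\to U_2$ with $\dim(f(G_1-U))<d$. By definition of the pseudo-oriented integral in \eqref{eq:int}, both $\int_{G_1}f^*(\omega)$ and $\int_{f(G_1)}\omega$ may be computed on these open representatives; the point is that $G_1-U$ and its image are negligible, so $\int_{G_1}f^*(\omega)=\int_U f^*(\omega)$ and $\int_{f(G_1)}\omega=\int_{f(U)}\omega$, where $f(U)\subset U_2$ carries the restricted orientation. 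The remaining task is then the equality $\int_U f^*(\omega)=\int_{f(U)}\omega$ for an orientation-preserving $C^p$-map between oriented $C^p$-manifolds.

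The next step is to handle the fact that $f|_U$ need not be injective: the classical change of variables formula sees the image with multiplicity. Here I would again invoke the definable structure theory: by the Cell Decomposition Theorem one can partition $U$ (up to dimension $<d$) into finitely many definable open cells on each of which $f$ is a $C^p$-diffeomorphism onto its image, and stratify the target so that the fibres of $f$ are constant over each stratum. On each such piece the classical (manifold) change of variables formula applies directly, and summing over the finitely many sheets produces the integral over $f(U)$ counted without multiplicity, which is exactly $\int_{f(U)}\omega$ by the pseudo-orientation convention (the generic fibre over the image is a single point on the piece where $f$ is generically finite-to-one and orientation-preserving). The absolute convergence statement then follows because the equality holds for $|\omega|$ simultaneously: each side is a finite sum of manifold integrals that are finite exactly when the other side is.

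I expect the main obstacle to be the multiplicity bookkeeping in the non-injective case, that is, showing cleanly that the definable image $f(G_1)$ with its induced pseudo-orientation receives the correct total from the finitely many orientation-preserving sheets of $f$. The delicate point is that $f$ is only \emph{continuous} definable a priori, and regularity is guaranteed merely on the dense open $\reg(f)$; one must verify that the lower-dimensional overlaps between sheets, the fold loci where the Jacobian degenerates, and the boundary between different fibre-cardinality strata all have dimension $<d$ and hence do not corrupt the integral. Once the definable stratification is set up so that each stratum contributes a single honest diffeomorphism, the actual analytic content is just the classical change of variables formula on $C^p$-manifolds, with the o-minimal finiteness results of \cite{vdDMiller:96} and cell decomposition doing the work of taming the singular sets.
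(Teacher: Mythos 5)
Your opening reduction (fixing representatives, passing to $U$ from the definition of compatibility) matches the paper's, but it then hits a genuine gap: you assert that ``$G_1-U$ and its image are negligible'', whereas the definition of compatibility with orientations only guarantees $\dim(f(G_1-U))<d$ --- the \emph{image} is small, not the set $G_1-U$ itself. Nothing prevents $f$ from collapsing a $d$-dimensional definable piece of $G_1$ onto a point; that piece lies in $G_1-U$ and has full dimension $d$, so the equality $\int_{G_1}f^*(\omega)=\int_U f^*(\omega)$ cannot be obtained by a dimension count. This is exactly the point to which the paper devotes the core of its proof: writing $G'=G_1-U$, one first removes from $G_1$ the (definable) vanishing locus of $f^*(\omega)$, which changes no integral and which contains the locus $G_0$ of positive fibre dimension, because the Jacobian of $f$ has rank $<d$ there; after this reduction all fibres of $f$ are $0$-dimensional, whence $\dim G'\leq \dim f^{-1}(f(G'))\leq \dim f(G')<d$ and therefore $\int_{G'}f^*(\omega)=0$. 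Your closing remarks about fold loci and fibre-cardinality strata having dimension $<d$ do not substitute for this: on a collapsed piece the correct statement is not that it is small (it need not be), but that $f^*(\omega)$ vanishes on it.

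The second problem is your treatment of non-injectivity, which you single out as the main obstacle. If $f|_U$ were genuinely allowed to be $k$-to-one with orientation-preserving sheets, then summing the classical change of variables over the sheets yields $k\int_{f(U)}\omega$, not $\int_{f(U)}\omega$; your claim that the sheets produce the image integral ``counted without multiplicity'' is false, and under that reading the proposition itself would be false --- take $G_1$ to be two disjoint copies of a disc, each mapped identically onto a single disc $G_2$: this is compatible with orientations in your sense, yet $\int_{G_1}f^*(\omega)=2\int_{G_2}\omega$. The proposition holds because compatibility with orientations, as the paper uses it, makes $U\to U_2$ an orientation-preserving embedding; this is what licenses the paper's appeal to ``the usual change of variables'' for $f|_U\colon U\to U_2$, and no multiplicity ever arises. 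So the stratification machinery you propose addresses a phantom difficulty and, as formulated, would establish a false identity; the real difficulty in this proposition is the one on the complement of $U$ described above.
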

\begin{rem} \label{rem:small}
If $\dim(f(G_1))<d$, then the statement has to be understood as saying that the left hand side vanishes, see \cite[Remark~3.15]{expper}. 
\end{rem}
\begin{proof} We choose representatives for the pseudo-orientations.
Neither the left hand side nor the right hand side change if we replace $G_1$ by an open subset
such that the complement has dimension less than $d$. Without loss of generality,
$G_1=\reg(f)$ and $G_1$ is oriented. Let $U_2\subset G_2$ be an oriented definable open subset representing the pseudo-orientation of $G_2$. Let $U\subset G_1$ be as in the definition of compatibility of $f$ with orientation, i.e., open, contained in $f^{-1}(U_2)$ and $\dim(f(G_1- U))<d$. By the usual change of variables for the differentiable map $f|_U:U\to U_2$, we have
\[ \int_Uf^*(\omega)=\int_{f(U)}\omega.\]
One side is absolutely convergent if and only if the other is.

Let $G'=G_1-U$.
As $\dim( f(G'))<d$, we also have
\[ \int_{f(U)}\omega=\int_{f(G_1)}\omega.\]
It remains to show that
\[ \int_Uf^*(\omega)=\int_{G_1}f^*(\omega).\]
Let $G_0\subset G_1$ be the set of points in which
$f$ has positive fibre dimension. The Jacobian  of the differentiable map 
$f:G_1\to X_2$ has rank smaller than $d$ on $G_0$, hence $f^*(\omega)=0$ on $G_0$. The vanishing locus of $f^*(\omega)$ on $G_1$ is definable. We may remove it from $G_1$ without changing the value of the integral. In particular, we have removed the positive dimensional fibres, so from now on all fibres of $f:G_1\to G_2$ have dimension $0$. This implies that $\dim(f^{-1}( f(G'))<d$, hence also 
$\dim G'<d$ and
\[ \int_{G'}f^*(\omega)=0.\] 
\end{proof}

\begin{lemma}\label{lem:transport}
Let $f:G_1\to G_2$ be a definable homeomorphism between definable subsets of definable manifolds with corners. Given a pseudo-orientation on $G_1$ there is a unique pseudo-orientation on $G_2$ making $f$ compatible with orientations, and conversely.
\end{lemma}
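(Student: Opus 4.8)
The plan is to transport a chosen orientation across $f$ on a large open locus where $f$ is a genuine $C^p$-diffeomorphism, and then to verify that the resulting equivalence class on $G_2$ is both well-defined and forced by the compatibility requirement. Throughout I use that a definable homeomorphism preserves dimension, so $\dim G_2=\dim G_1=d$, and I treat $f^{-1}\colon G_2\to X_1$ as a continuous definable map on the same footing as $f$.

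First I would produce the diffeomorphism locus. Applying the preceding lemma on $\reg$ to both $f$ and $f^{-1}$ yields definable open sets $\reg(f)\subset G_1$ and $\reg(f^{-1})\subset G_2$, each with complement of dimension $<d$. I set
\[ U=\reg(f)\cap f^{-1}\bigl(\reg(f^{-1})\bigr). \]
Since $f$ is a homeomorphism, $f^{-1}\bigl(\reg(f^{-1})\bigr)$ is open in $G_1$ and its complement $f^{-1}\bigl(G_2-\reg(f^{-1})\bigr)$ has dimension $<d$; hence $U$ is definable, open, contained in $\reg_d(G_1)$, and satisfies $\dim(G_1-U)<d$. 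On $U$ the map $f$ is $C^p$ and its inverse is $C^p$ near $f(U)$, so $f|_U\colon U\to f(U)$ is a $C^p$-diffeomorphism onto an open subset $f(U)\subset\reg(f^{-1})\subset\reg_d(G_2)$ with $\dim(G_2-f(U))=\dim f(G_1-U)<d$.

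Next I would define the pseudo-orientation and check compatibility. Choosing a representative $(U_1,o_1)$ of the pseudo-orientation on $G_1$ and replacing $U$ by $U\cap U_1$ (which still has small complement), I transport $o_1|_U$ through the diffeomorphism $f|_U$ to an orientation $o_2$ on $f(U)$. The pair $(f(U),o_2)$ represents a pseudo-orientation on $G_2$, and $f$ is compatible with orientations for this choice, with $U$ as the witnessing open set: by construction $U\to f(U)$ is orientation preserving and $\dim f(G_1-U)<d$. For well-definedness, two choices of $(U_1,o_1)$ and of $U$ give orientations on $G_2$ that agree on the overlap of the two diffeomorphism loci, an open set whose complement has dimension $<d$, since both are the pushforward of the same orientation; they therefore determine the same pseudo-orientation. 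For uniqueness, if $o_2$ and $o_2'$ both make $f$ compatible, intersecting their witnessing open sets with $U$ produces a definable open $W\subset G_1$ with $\dim(G_1-W)<d$ on which $f$ is an orientation-preserving $C^p$-diffeomorphism into representatives of both; hence $o_2$ and $o_2'$ both coincide with the pushforward of $o_1|_W$ on $f(W)$ and so represent the same pseudo-orientation. The converse follows by running the construction with $f^{-1}$ in place of $f$.

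The main obstacle is the first step: extracting a single open set of full-dimensional complement on which $f$ is a genuine $C^p$-diffeomorphism, rather than merely $C^p$ in one direction. This is exactly where one must combine the $\reg$-lemma applied to both $f$ and $f^{-1}$ with the invariance of dimension under the definable homeomorphism; once this locus is in hand, the transport of the orientation and the bookkeeping of equivalence classes are routine.
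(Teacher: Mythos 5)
Your proposal is correct and follows essentially the same route as the paper: the paper's proof also removes the complements of $\reg_d(G_1)$, $\reg_d(G_2)$, $\reg(f)$ and $\reg(f^{-1})$, reducing to the case where $G_1$, $G_2$ are manifolds and $f$ is a $C^p$-diffeomorphism, and then transports the orientation. Your write-up merely makes explicit the well-definedness and uniqueness bookkeeping (and the use of invariance of dimension under definable homeomorphisms) that the paper leaves implicit.
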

\begin{proof}We may remove the complements of $\reg_d(G_1)$ and $\reg_d(G_2)$
as well as $\reg(f)$ and $\reg(f^{-1})$ from the situation. So without loss of generality
$G_1$ and $G_2$ are manifolds and $f$ is a diffeomorphism. We can then use
$f$ to transport the orientation.
\end{proof}

\begin{rem}The result is standard for oriented manifolds, even with boundary. Even though we usually think of orientations in terms of the tangent bundle, it is actually a completely topological notion that can be determined in terms of homology, see \cite[Section~3.3]{hatcher}.
\end{rem}

\begin{cor}\label{cor:int_simplex}
Let $1\leq p\leq \infty$. Let $X$ be a definable $C^p$-manifold with corners,
and $\sigma:\Delta_d\to X$ be  a definable continuous map, $\omega$ a $C^1$-form on a neighbourhood of $\sigma(\Delta_d)$.  Then $\sigma^*(\omega)$ is measurable and
\[ \int_{\Delta_d}\sigma^*(\omega)\]
is absolutely convergent.
\end{cor}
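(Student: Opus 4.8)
The plan is to transport the integral from $\Delta_d$ to the compact image $\sigma(\Delta_d)$ and to apply Proposition~\ref{prop:converge} there, using the change of variables formula as the bridge. Two reductions come first. Since a definable $C^\infty$-atlas is in particular a definable $C^1$-atlas, I regard $X$ as a $C^1$-manifold with corners and thereby reduce to the case $1\leq p<\infty$ where the change of variables proposition is available; the hypotheses on $\sigma$ (definable, continuous) and on $\omega$ ($C^1$, hence continuous) are unaffected. Second, $\sigma(\Delta_d)$ is a compact definable subset of $X$, being the continuous definable image of the compact definable set $\Delta_d$. For measurability, $\sigma$ is definable, so its differentiability locus is a definable set of full measure (its complement is definable of dimension $<d$ by \cite[B.7, C.2]{vdDMiller:96}), and on it the coefficient of $\sigma^*(\omega)$ with respect to $\md a_1\wedge\dots\wedge\md a_d$ is a definable, hence Borel, function; extending by $0$ across the null complement shows $\sigma^*(\omega)$ is measurable.

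The one genuine obstacle for convergence is that $\sigma$ need not be injective, so the change of variables proposition does \emph{not} apply to $\sigma$ directly: its compatibility-with-orientations hypothesis fails as soon as $\sigma$ has fibres of more than one point over a full-dimensional part of the image. I would circumvent this by a definable cell decomposition: after discarding a definable set of dimension $<d$, partition the domain into finitely many definable open cells $C_1,\dots,C_m$ such that on each $C_j$ the map $\sigma$ is $C^1$ and either has rank $<d$ everywhere, in which case $\sigma^*(\omega)\equiv 0$ on $C_j$, or restricts to a definable diffeomorphism onto its image $\sigma(C_j)\subset X$.

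Absolute convergence is insensitive to orientation, so it suffices to bound $\int_{\Delta_d}|\sigma^*(\omega)|=\sum_j\int_{C_j}|\sigma^*(\omega)|$, and by Lemma~\ref{lem:suffice} I may treat each standard summand $\md x_I$ separately. On a full-rank cell $C_j$ the restriction $\sigma|_{C_j}$ is a definable homeomorphism onto $\sigma(C_j)$, so Lemma~\ref{lem:transport} supplies compatible pseudo-orientations and the change of variables proposition shows that $\int_{C_j}\sigma^*(\md x_I)$ is absolutely convergent if and only if $\int_{\sigma(C_j)}\md x_I$ is. Since the integrand $|\md x_I|$ is nonnegative and $\sigma(C_j)\subset\sigma(\Delta_d)$, the latter is at most $\int_{\sigma(\Delta_d)}|\md x_I|$, which is finite because $\sigma(\Delta_d)$ is compact and $\md x_I$ is continuous, by Proposition~\ref{prop:converge}. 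Summing the finitely many cells bounds $\int_{\Delta_d}|\sigma^*(\omega)|$ and yields absolute convergence; the degenerate rank-$<d$ cells contribute nothing, consistent with Remark~\ref{rem:small}.

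The main work is therefore the cell-decomposition step that reduces $\sigma$ to injective, full-rank pieces; once that is in place, all analytic content is outsourced to Proposition~\ref{prop:converge} through the compactness of the image, and no further estimates near the boundary of $\Delta_d$ are needed.
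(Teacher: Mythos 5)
Your proposal is correct, and its skeleton matches the paper's proof: reduce to $p<\infty$, deduce measurability from definability of the regularity locus, transport the integral to the compact image, and conclude with Proposition~\ref{prop:converge}. Where you genuinely diverge is in the treatment of non-injectivity, and your instinct here is well founded: the paper's own proof applies Lemma~\ref{lem:transport} directly to $\sigma$ and asserts the identity $\int_{\Delta_d}\sigma^*(\omega)=\int_{\sigma(\Delta_d)}\omega$, even though Lemma~\ref{lem:transport} is stated only for definable \emph{homeomorphisms}. For a genuinely non-injective $\sigma$ both the transported pseudo-orientation and that identity can fail --- for the fold $\sigma(x)=|x-\tfrac12|$ on $\Delta_1$ with $\omega=\md x$, the left side is $0$ while the right side is $\pm\tfrac12$ for either orientation; only the absolute convergence, which is all the corollary actually claims, survives. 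Your cell decomposition into rank-deficient cells (where $\sigma^*(\omega)$ vanishes) and full-rank cells on which $\sigma$ restricts to a definable diffeomorphism onto its image, followed by change of variables cell by cell and the monotone bound $\int_{\sigma(C_j)}|\md x_I|\le\int_{\sigma(\Delta_d)}|\md x_I|$, is exactly the repair needed to make the argument valid for arbitrary definable $\sigma$; the cost is the decomposition itself, in particular the injectivity refinement (on a full-rank cell the fibres are finite, and one then partitions further, e.g.\ via cell decomposition of the graph of $\sigma$, into finitely many pieces on which $\sigma$ is injective), which you assert without proof or citation and which deserves one. Two minor points: the coefficient of $\sigma^*(\omega)$ on the regularity locus is not definable, since $\omega$ need not be definable --- it is a product of the continuous function $a_I\circ\sigma$ with a definable Jacobian minor, hence still Borel, which suffices; and when bounding $\int_{\sigma(C_j)}|\md x_I|$ by the integral over $\sigma(\Delta_d)$, note that $\reg_d(\sigma(C_j))$ may meet the non-regular locus of $\sigma(\Delta_d)$, but only in a set of dimension $<d$, so the comparison of integrals is unaffected. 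In sum, the paper's route is shorter and also produces the change-of-variables identity, but is literally valid only when $\sigma$ is generically injective (the case of triangulation simplices); your route proves precisely the stated convergence for all definable $\sigma$.
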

\begin{proof} Without loss of generality $p<\infty$. Apply Lemma~\ref{lem:transport} to the standard orientation
on the interior of $\Delta_d\subset\R^d$. This yields a pseudo-orientation on
$\sigma(\Delta_d)$ such that $\sigma$ is compatible with orientation. The form
$\sigma^*(\omega)$ is continuous on the regular locus of $\sigma$, hence measurable on $\Delta_d$. We then have
\[ \int_{\Delta_d}\sigma^*(\omega)=\int_{\sigma(\Delta_d)}\omega.\]
The right hand side is absolutely convergent by Proposition~\ref{prop:converge}.
\end{proof}
\begin{rem}
Note that $\sigma^*(\omega)$ is not continuous/bounded/defined everywhere unless we impose global differentiability conditions on $\sigma$!
\end{rem}


\section{A Theorem of Stokes in tame geometry}

After the preparation in the previous section, we are ready to address Stokes' formula.

\begin{prop}\label{prop:definable_stokes}
Let $2\leq p\leq \infty$. Let $X$ be a definable $C^p$-manifold with corners,
and $\sigma:\Delta_d\to X$ be  a definable continuous map, $\omega$ a $(d-1)$-form of class $C^p$ on a neighbourhood of $\sigma(\Delta_d)$.  Then Stokes' formula holds:
\[ \int_{\Delta_d}\sigma^*(\md \omega)=\int_{\partial \Delta_d}\sigma^*(\omega).\]
\end{prop}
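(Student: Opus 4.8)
The plan is to prove Stokes' formula for a definable continuous $\sigma$ by triangulating the source in a way compatible with the singularities of $\sigma$, thereby reducing to the case where the restriction of $\sigma$ to each piece is globally $C^1$, where the classical Stokes formula applies. The work of the previous sections has already arranged the analytic side: by Corollary~\ref{cor:int_simplex} both integrals are absolutely convergent, so there is no issue of whether the two sides make sense. What remains is to justify the \emph{equality}, and the obstruction is precisely that $\sigma$ is only continuous (not $C^1$) on the boundary faces and along the locus where $\sigma$ fails to be a diffeomorphism, so the naive application of the classical theorem is not licensed.

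First I would invoke the structure theory of definable sets—specifically the strict $C^1$-triangulation result of Czap{\l}a--Paw{\l}ucki \cite{omin-triang}, as already used in the proof of Proposition~\ref{prop:converge}—to triangulate $\Delta_d$ compatibly with the locus $\Delta_d \ohne \reg(\sigma)$ where $\sigma$ fails to be $C^1$. This yields a subdivision of the identity map into definable subsimplices $\sigma \circ \tau_\nu$, each of which is $C^1$ as a map into $X$ (reading through a chart, into $\R^N$). Because the barycentric-type subdivision is compatible with the boundary operator and with orientations (Lemma~\ref{lem:transport}), it suffices to establish the formula for each $C^1$ piece and then sum: the internal faces cancel in pairs with opposite orientation, leaving exactly $\int_{\partial\Delta_d}\sigma^*(\omega)$ on the boundary.

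For a single piece, where $\sigma$ is genuinely $C^1$, the pull-backs $\sigma^*(\omega)$ and $\sigma^*(\md\omega)=\md\sigma^*(\omega)$ are continuous and I would apply the classical Stokes formula for $C^1$-forms on a manifold with corners (e.g.\ as in \cite{warner}). One must check that $\sigma^*(\md\omega)=\md(\sigma^*(\omega))$ as differential forms, which holds because pull-back commutes with $\md$ for $C^1$ maps, and that the requirement $p\geq 2$ ensures $\md\omega$ is $C^{p-1}$, hence at least $C^1$, so the hypotheses of Corollary~\ref{cor:int_simplex} apply to both $\omega$ and $\md\omega$ and guarantee convergence on every piece.

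The main obstacle is bookkeeping at the boundary: I must ensure that the triangulation is subordinate simultaneously to the open faces of $\Delta_d$ (so that the induced triangulation of $\partial\Delta_d$ matches), to the charts of $X$, and to $\Delta_d\ohne\reg(\sigma)$, and that the pseudo-orientations transported via Lemma~\ref{lem:transport} are coherent across shared faces so that interior contributions genuinely cancel. The set where $\sigma$ degenerates (positive fibre dimension) has dimension $<d$ by the change-of-variables discussion, hence contributes nothing; the delicate point is that the lower-dimensional singular locus on the boundary faces could a priori carry mass, but absolute convergence from Corollary~\ref{cor:int_simplex} rules this out, so the measure-zero exceptional sets introduced by the triangulation are harmless.
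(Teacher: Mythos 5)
There is a genuine gap at the heart of your argument, in the step ``triangulate $\Delta_d$ compatibly with $\Delta_d\ohne\reg(\sigma)$ \dots\ this yields definable subsimplices $\sigma\circ\tau_\nu$, each of which is $C^1$.'' Compatibility of a triangulation with the singular locus only means that this locus (which has dimension $<d$) is a union of open simplices of the triangulation, hence sits inside the $(d-1)$-skeleton. The restriction of $\sigma$ to each \emph{open} $d$-simplex is then $C^1$, but the restriction to the \emph{closed} $d$-simplex need not be: consider $\sigma(x)=\sqrt{x}$ on $[0,1]$, which is definable, continuous, and $C^1$ exactly on $(0,1]$; no subdivision of $[0,1]$, however fine and however compatible with $\{0\}$, produces a closed subinterval containing $0$ on which $\sigma$ is $C^1$. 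So Whitney's $C^1$-Stokes theorem does not apply to your pieces, and asserting Stokes for a simplex that is merely $C^1$ on open faces with convergent integrals is precisely the statement being proved --- the argument is circular for those pieces. Your appeal to absolute convergence (Corollary~\ref{cor:int_simplex}) in the last paragraph cannot close this: convergence of both sides and their equality are independent issues, which is exactly the distinction between problems (1) and (2) in the introduction of the paper.

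What is missing is a \emph{reparametrisation}, not a subdivision. The paper applies the Czap{\l}a--Paw{\l}ucki theorem not to $\Delta_d$ but to the \emph{graph} $\Gamma\subset\Delta_d\times\R^N$ of $\sigma$ (after reducing to $X=\R^N$ via charts, which you do correctly at the end). A strict $C^1$-triangulation of $\Gamma$, projected to the first factor, gives a triangulation $(K,\Phi)$ of $\Delta_d$ for which \emph{both} $\Phi$ and $\sigma\circ\Phi$ are globally $C^1$; in the $\sqrt{x}$ example this is the substitution $x=t^2$, which turns $\sqrt{x}$ into the smooth function $t$. After this change of variables (justified by the change-of-variables proposition of Section~\ref{sec:tame}), each simplex $\sigma\circ\Phi\circ\tau$ is honestly $C^1$ on the closed simplex, Whitney's theorem applies, and your cancellation-of-interior-faces bookkeeping then goes through exactly as you describe. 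So your overall architecture (reduce to charts, triangulate, apply $C^1$-Stokes piecewise, cancel interior faces) matches the paper's, but without the graph trick the central step fails.
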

\begin{proof} Both sides of the formula are well-defined by Corollary~\ref{cor:int_simplex}.
The values of the integral do not change when making $p$ smaller. Without loss of generality $p<\infty$, so that all results in the previous section apply.

We begin with the case $X=\R^N$. 
Let $\Gamma\subset \Delta_d\times\R^N$ be the graph of $\sigma$.
We choose a definable triangulation of $\Gamma$ that is globally $C^1$. It exists by Czap{\l}a-Paw{\l}ucki \cite[Main Theorem]{omin-triang}.  The projection to the first factor is a triangulation $(K,\Phi)$ of $\Delta^d$ such that both $\Phi$ and $\sigma\circ\Phi$ are globally $C^1$.
The orientation on $\Delta_d$ defines an orientation on $|K|$ and all $d$-simplices in $K$.   By change of variables for $\Phi$

\[ 
\int_{\Delta_d}\sigma^*(\md \omega)=\int_{|K|}\Phi^*\sigma^*(\md \omega),\quad 
\int_{\partial \Delta_d}
\sigma^*(\omega)=\int_{\partial|K|} \Phi^*\sigma^* (\omega).
\]
%
%

Let
\[ \tau:\Delta_d\to |K|\]
be a simplex of $K$. The map $\sigma\circ\Phi\circ\tau$ is $C^1$ (globally, not only on open faces).
 By Stokes' theorem (in the $C^1$-version of Whitney in \cite[Chapter~III, \S\S 16-17]{whitney}, see \cite[Theorem~1.4]{expper} for details) we have
\[ \int_{\Delta_d}\tau^*\Phi^*\sigma^*(\md \omega)=\int_{\partial \Delta_d}\tau^*\Phi^*\sigma^*(\omega).\]
We sum over all $d$-simplices in $K$. 
 The union of the interiors of all $d$-simplices
$\bigcup_\tau \tau(\Delta_d^\circ)$ is open and dense in $|K|$, hence the sum is
\[ \sum_{\tau\in K_d}\int_{\Delta_d}\tau^*\Phi^*\sigma^*(\md \omega)=\int_{|K|}\Phi^*\sigma^*(\md \omega).\]
Now consider the sum over the boundaries. Two things can happen:
if $F$ is an open $(d-1)$-simplex of $K$, then it is either fully contained
in the interior of $|K|$ or fully contained in $\partial |K|$. In the first case, there is a second $d$-simplex with face $F$, but opposite orientation. These contributions cancel. In the second case, $F$ is part of a triangulation of $\partial |K|$.  Hence the sum gives
\[\sum_{ \tau\in K_d}\int_{\partial \Delta_{d}}\tau^*\Phi^*\sigma^*(\omega)=\int_{\partial |K|}\Phi^*\sigma^*(\omega).\]
Putting the equalities together, we have Stokes' formula for $\sigma$ for $X=\R^N$.

For general definable manifolds  with corners $X$ with finite atlas $(U_i,\phi_i)_{i\in I}$, we may apply repeated barycentric subdivision of $\Delta_d$ into linear subsimplices
$\delta_1,\dots,\delta_M$
 such that the image of each subsimplex $\delta_j$ is contained in a single chart $U_{i(j)}$. We apply Stokes' formula to
$\phi_{i(j)}\circ\sigma|_{\delta_j}$. By change of variables with respect to $\phi_{i(j)}$, this gives Stokes' formula for $\sigma|_{\delta_j}$. Again by adding up the contributions of all $\delta_j$ we get the formula for $\sigma$. 
\end{proof}

\begin{rem}\label{rem:stokes-literatur}
There is already a rich literature on Stokes' Theorem in tame settings. 
To name a few: Paw{\l}ucki, see \cite{pawlucki-stokes}, works in the sub-analytic setting;  Funk, see \cite{funk}, in the semi-algebraic setting and most recently Julia, see \cite{julia-thesis} and \cite{julia-stokes}, includes the o-minimal case. The latter two authors use the more modern language of integral currents. 

These results have a different flavour from the above. The differential form is assumed to be continuous (still bounded in the more general \cite[Theorem~A]{julia-stokes}), excluding $\sigma^*(\omega)$ as above.  

Instead their results imply (as a very special case) a Stokes' formula 
\[ \int_{\sigma(\Delta_d)}\md \omega=\int_{\Sigma}\omega\]
where $\Sigma$ is a suitable definable set of dimension $d-1$. (The quasi-regular points of the frontier of the regular locus in the case of \cite{pawlucki-stokes}; constructed from a cell decomposition in \cite{julia-thesis}). 

In order to deduce Proposition~\ref{prop:definable_stokes}, one would need to relate $\sigma(\partial\Delta_d)$ and
$\Sigma$. This is probably doable using the strict $C^1$-triangulations that the above proof uses to give the proof directly. 
\end{rem}

We can recover a similar version of Stokes' Theorem, at least for $C^1$-forms.

\begin{prop}\label{prop:definable_stokes_II}
 Let $X$ be a definable $C^p$-manifold with corners for $2\leq p<\infty$.
Let $G\subset X$ be a pseudo-oriented compact definable subset of dimension $d$ and
 $\omega$ a $(d-1)$-form of class $C^1$ on a neighbourhood of $G$. 
Then there is a formal $\Z$-linear combination of pseudo-oriented compact definable subsets of $G$ of dimension $(d-1)$ such that
\[ \int_{G}\md \omega=\int_\Sigma \omega.\]
\end{prop}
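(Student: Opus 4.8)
The plan is to reduce to the affine case and there apply a strict $C^1$-triangulation together with Whitney's $C^1$-version of Stokes' theorem, exactly along the lines of the proof of Proposition~\ref{prop:definable_stokes}. The one new feature is that $\omega$ is only of class $C^1$, so that $\md\omega$ is merely continuous; consequently we cannot invoke Proposition~\ref{prop:definable_stokes} itself (whose proof integrates $\sigma^*(\md\omega)$ via Corollary~\ref{cor:int_simplex} and hence needs $\md\omega$ to be $C^1$), but must apply Stokes at the level of the individual simplices of the triangulation, which are globally $C^1$. First I would record that both sides are well-defined: $\md\omega$ is a continuous $d$-form, so $\int_G\md\omega$ converges by Proposition~\ref{prop:converge}, and the same proposition guarantees convergence of $\int_\Sigma\omega$ for any finite chain $\Sigma$ of compact definable $(d-1)$-subsets.

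Next I would reduce to the case $X=\R^N$. Covering the compact set $G$ by finitely many charts and writing it as a union of compact definable pieces each contained in a single chart, an inclusion--exclusion argument (as in the proof of Proposition~\ref{prop:converge}) expresses $\int_G\md\omega$ as a $\Z$-linear combination of integrals $\int_{G_S}\md\omega$, where each $G_S$ is compact, definable, pseudo-oriented by restriction from $G$, and contained in a single chart. Applying the change of variables formula through that chart reduces each such term to a subset of $\R^N$, and the resulting boundary chains, all paired with the \emph{same} form $\omega$, are then assembled into $\Sigma$. I would use inclusion--exclusion rather than a partition of unity precisely because the latter would insert cut-off factors into the boundary integrals and spoil the clean pairing with $\omega$.

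So assume $G\subset\R^N$. Applying \cite[Main Theorem]{omin-triang} to $G$ together with the closed set $G\ssm U$, where $U$ represents the pseudo-orientation, I obtain a strict $C^1$-triangulation $(K,\Phi)$ of $G$ in which $G\ssm U$ is a union of simplices of dimension $<d$; in particular every open $d$-simplex of $K$ maps into $U$. Each $d$-simplex yields a globally $C^1$ map $\sigma_i=\Phi\circ\tau_i:\Delta_d\to G\subset\R^N$, which I orient by the orientation of $U$ on its image. Since the open $d$-simplices cover $U$ up to a set of measure zero and each $\sigma_i$ is an orientation-preserving $C^1$-diffeomorphism onto its image there, change of variables gives
\[ \int_G\md\omega=\int_U\md\omega=\sum_i\int_{\Delta_d}\sigma_i^*(\md\omega).\]
As $\sigma_i$ is globally (not merely facewise) $C^1$ and $\omega$ is $C^1$, Whitney's $C^1$-Stokes theorem (\cite[Theorem~1.4]{expper}, after \cite[Chapter~III, \S\S 16--17]{whitney}) yields $\int_{\Delta_d}\sigma_i^*(\md\omega)=\int_{\partial\Delta_d}\sigma_i^*(\omega)$. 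Expanding the boundary into its faces $\sigma_i^{(j)}=\sigma_i\circ e_j:\Delta_{d-1}\to G$ and applying Corollary~\ref{cor:int_simplex} together with Lemma~\ref{lem:transport} to pseudo-orient each image, I set
\[ \Sigma=\sum_i\sum_{j}(-1)^{j}\bigl[\sigma_i^{(j)}(\Delta_{d-1})\bigr],\]
a formal $\Z$-linear combination of compact definable subsets of $G$ of dimension at most $d-1$ (those of smaller dimension contributing $0$ in the sense of Remark~\ref{rem:small}). Summing the simplexwise identities gives $\int_G\md\omega=\int_\Sigma\omega$.

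The main obstacle is the bookkeeping of orientations in the affine step: one must arrange the triangulation to be compatible with the representative $U$ of the pseudo-orientation, so that every open top-dimensional simplex lands in $U$, and then orient the $\sigma_i$ coherently with $U$ so that the sum of the simplexwise integrals reconstitutes $\int_U\md\omega=\int_G\md\omega$ with the correct signs. The compatibility of \cite{omin-triang} with a prescribed closed definable subset is exactly what makes this possible; once the orientations are fixed, the cancellation of interior faces (should one wish to simplify $\Sigma$ to the genuine topological boundary) and the identification of $\sum_i\int_{\partial\Delta_d}\sigma_i^*(\omega)$ with $\int_\Sigma\omega$ are routine.
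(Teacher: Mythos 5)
Your proof is correct, and its core coincides with the paper's: a strict $C^1$-triangulation compatible with the closed lower-dimensional set $G\ssm U$, Whitney's $C^1$-version of Stokes applied simplex by simplex to the globally $C^1$ maps (this is precisely where the hypothesis ``$\omega$ of class $C^1$'' suffices, whereas Proposition~\ref{prop:definable_stokes} would require $C^2$ --- you diagnose this point correctly), and the signed face images assembled into $\Sigma$. Where you genuinely diverge is the treatment of a general manifold $X$. The paper does not localise at all: it invokes \cite[Proposition~7.6]{expper}, which produces a definable triangulation of the \emph{whole} manifold $X$, globally of class $C^1$ and compatible with $\bar U$ and $\bar U - U$; it then takes the $d$-simplices $\tau_i$ contained in $\bar U$ (their interiors lie in $U$ and inherit its orientation) and sets $\Sigma=\sum_i\partial\tau_i$. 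The parenthetical ``or directly \cite{omin-triang} for $X=\R^N$'' in the paper's proof is exactly your affine core. Your chart-by-chart inclusion--exclusion replaces that single citation, and it is sound: the pieces $G_S$ are compact and definable, they inherit pseudo-orientations (the frontier inequality gives $\dim\bigl(G_S\ssm\mathrm{int}_G(G_S)\bigr)<d$, so a representative open set survives restriction), the Lebesgue integral is additive over the pieces up to measure-zero overlaps, and --- importantly --- since the proposition only asks for \emph{some} formal $\Z$-linear combination, the uncancelled ``cut'' faces created by chopping $G$ do no harm. Your observation that a partition of unity is unusable here (the cut-off $\rho$ would enter through $\md(\rho\omega)=\md\rho\wedge\omega+\rho\,\md\omega$ and destroy the clean pairing with $\omega$) is exactly right and is why inclusion--exclusion is the correct localisation tool, just as in the proof of Proposition~\ref{prop:converge}. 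The trade-off: the paper's route buys brevity --- one global triangulation, no bookkeeping of pieces, restricted pseudo-orientations, or additivity --- at the price of relying on the manifold-level triangulation theorem of \cite{expper}; your route needs only the affine result of \cite{omin-triang} plus elementary (if slightly fussy) reductions.
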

\begin{proof}Let $U\subset G$ be  a definable oriented subset representing the 
pseudo-orientation. By \cite[Proposition~7.6]{expper} (or directly \cite{omin-triang} for $X=\R^N$), there is a definable triangulation $(K,\Phi)$ of $X$ compatible with $\bar{U}$ and $\bar{U}-U$ which is globally of class $C^1$. Let
$\tau_1,\dots,\tau_n$ be the $d$-dimensional simplices contained in $\bar{U}$. Their interior is contained in $U$.
Hence all $\tau_i$ inherit an orientation from $U$. We define $\Sigma$ as the chain
\[ \Sigma=\sum_i \partial\tau_i.\]
As in the proof of Proposition~\ref{prop:definable_stokes} we apply Whitney's
$C^1$-version of Stokes' Theorem and obtain
\[ \int_G\md \omega=\sum_i \int_{\Delta_d}\md \tau_i^*(\omega)=\sum_i \int_{\partial \Delta_i}\tau_i^*(\omega) =\int_\Sigma\omega.\]
\end{proof}

\begin{rem} In the language used in \cite{julia-thesis}: the set
$G$ defines a current $T$ and $\partial T$ is definable because it is represented by $\Sigma$. This is Julia's \cite[Theorem~6.3.2]{julia-thesis}. He has to make a subtle limit argument. This difficulty has not disappeared---it is contained in \cite{omin-triang}.
\end{rem}

\section{Period isomorphisms}
We are now ready to show that the period pairing for definable manifolds can be computed via definable simplices. As in Section~\ref{sec:tame} we fix an o-minimal structure on the real field. By \emph{definable} we always
mean definable with parameters in a fixed subfield $k$ of $\R$. Throughout this section, let
$X$ be a definable $C^\infty$-manifold with corners in the sense of Sections~\ref{sec:manifolds} and \ref{sec:tame}.

\begin{notation}We set:
\begin{itemize}
\item $S_d^\sing(X)$ the free abelian group with  basis continuous maps
$\sigma:\Delta_d\to X$;
\item $S_d^\infty(X)$ the free abelian group with basis smooth maps
$\sigma:\Delta_d\to X$;
\item $S_d^\stokes(X)$ the free abelian group with basis continuous maps $\sigma:\Delta_d\to X$ which are $C^1$ on all open faces and such that all faces have finite volume (see Definition~\ref{defn:fvol}) and satisfy Stokes (see Definition~\ref{defn:stokes});
\item $S_d^{\df,C^1}(X)$ the free abelian group with basis definable continuous maps $\sigma:\Delta_d\to X$ which are $C^1$ on all open faces;
\item $S_d^\df(X)$ the free abelian group with basis definable continuous maps $\sigma:\Delta_d\to X$;
\item $A^d(X)$ the space of all smooth $d$-forms on $X$.
\end{itemize}
\end{notation} 

In each case, the groups organise into a complex with the standard differential for singular homology and de Rham cohomology, respectively. The complexes $S^\df_\bullet(X)$ are functorial for all continuous definable maps between definable manifolds with corners. 

\begin{rem}\label{rem:defn_hom}
By definition, the homology of $S_\bullet^\sing(X)$ is the \emph{singular homology}.
The complexes $S^\df_\bullet(X)$ are the ones appearing
already in the definition of \emph{definable homology} in \cite{edmundo-woerheide} by Edmundo--Woerheide (in fact the very special case of an o-minimal structure on the real field with field of constants a subfield of $\R$).  I thank a referee for making me aware of their work.
\end{rem}

In applications, it is often helpful to pass to a subcomplex with a finite basis, even if functoriality is lost. This is where simplicial homology comes in. Definable triangulations exist for compact definable spaces, see the discussion in the appendix.

\begin{notation} Assume that $X$ is compact and let $(K,\Phi)$ be a definable triangulation of $X$. We set
\begin{itemize}
\item $S^\Delta_d(X)$ the free abelian group with basis the elements of
$K_d$.
\end{itemize}
\end{notation}
We obtain a complex $S_\bullet^\Delta(X)$ with the differential of simplicial homology. It is well-known to compute singular homology of $X$, see for example \cite[Theorem~2.3.10]{period-buch}.

\begin{prop}The inclusions
\[\begin{xy}
\xymatrix{
&&S_\bullet^\sing(X)\\
&S_\bullet^\df(X)\ar@{^{(}->}[ru]&&S_\bullet^\stokes(X)\ar@{_{(}->}[ul]\\
S_\bullet^\Delta(X)\ar@{^{(}->}[ru] &&S_\bullet^{\df,C^1}\ar@{_{(}->}[lu]\ar@{^{(}->}[ru] &&S_\bullet^\infty(X)\ar@{_{(}->}[lu]
}
\end{xy}\]
are natural quasi-isomorphisms, with the caveat that $X$ has to be compact for the comparison with simplicial homology and that the inclusion
of $S_\bullet^\Delta(X)$ only has the functoriality of simplicial homology.
\end{prop}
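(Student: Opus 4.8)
The plan is to show that all six displayed complexes compute singular homology by comparing each of them to the singular complex $S^\sing_\bullet(X)$, using the results already established. The key observation is that several of these comparisons are already done or nearly done. For the vertex $S^\stokes_\bullet(X)$, Theorem~\ref{thm:fin_vol} tells us directly that the complex of simplices all of whose faces satisfy Stokes computes singular homology, since $X$ is in particular a $C^1$-manifold with corners. For $S^\Delta_\bullet(X)$ the statement that simplicial homology computes singular homology is classical (cited as \cite[Theorem~2.3.10]{period-buch}), so that arrow reduces to a known fact. The remaining work is to handle the four complexes built from smooth, definable, and definable-$C^1$ simplices.

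First I would treat the two ``standard'' vertices $S^\infty_\bullet(X)$ and $S^\sing_\bullet(X)$ together by the same sheaf-theoretic machinery used in the proof of Theorem~\ref{thm:fin_vol}: the Warner argument via fine resolutions of the constant sheaf $\Z$ applies verbatim to any class of simplices closed under the cone construction $\sigma\mapsto\hat\sigma$ and barycentric subdivision. For smooth simplices this is Warner's original setting; the cone of a smooth simplex into a convex chart is smooth, and barycentric subdivision preserves smoothness, so $S^\infty_\bullet(X)$ computes singular homology. For the definable and definable-$C^1$ complexes I would check the same two stability properties. The cone $\hat\sigma$ of a definable map is definable (the defining formulas are semi-algebraic in the coordinates, composed with $\sigma$), and it is $C^1$ on open faces by the explicit computation in Section~\ref{sec:set-up}; and barycentric subdivision is given by composing with affine linear maps, which preserves both definability and the $C^1$-on-open-faces condition. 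Hence the same fine-resolution argument shows that $S^\df_\bullet(X)$ and $S^{\df,C^1}_\bullet(X)$ each compute singular homology. The crucial point making the arrows into $S^\stokes_\bullet(X)$ and into $S^\df_\bullet(X)$ well-defined is that every definable simplex satisfies Stokes and has finite volume: this is exactly Corollary~\ref{cor:int_simplex} and Proposition~\ref{prop:definable_stokes}, which guarantee $S^{\df,C^1}_\bullet(X)\hookrightarrow S^\stokes_\bullet(X)$.

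Once each vertex is known to compute singular homology, I would argue that each displayed inclusion is a quasi-isomorphism by a naturality/two-out-of-three argument. Every arrow in the diagram is a morphism of complexes commuting with the differential, and each source and target computes singular homology; it remains to see that the induced maps on homology coincide with the canonical identifications. This follows because all the comparison isomorphisms above are constructed through the \emph{same} resolution of the constant sheaf $\Z$ and the \emph{same} comparison map from presheaf sections to sheafification, so the squares relating them commute up to the canonical identification of $H^\bullet(X,\Z)$. Concretely, each inclusion of subcomplexes induces the identity on the sheafified level, so it is a quasi-isomorphism. Functoriality is clear for all arrows except the one out of $S^\Delta_\bullet(X)$, which depends on the chosen triangulation and is only natural for simplicial maps---hence the stated caveat.

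The main obstacle I expect is verifying that the sheaf-theoretic resolution argument of Theorem~\ref{thm:fin_vol} genuinely carries over to the \emph{definable} setting, where one must check that the contractibility step survives. In Warner's argument the homotopy $\sigma\mapsto\hat\sigma$ on a convex chart requires that the cone stay inside the chart and remain in the given class of simplices; in the definable case one must additionally confirm that definable partitions of unity or the fine-sheaf property are available. Since the paper works with a fixed finite atlas and definable charts, and since the fine-sheaf argument in Theorem~\ref{thm:fin_vol} is noted to use only paracompactness of $X$ rather than properties of the simplices, I expect this obstacle to dissolve: one restricts to definable open balls in $\R^a_{\geq 0}\times\R^b$, where convexity makes $\hat\sigma$ land inside the chart, and definability is preserved by the cone. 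The genuinely delicate checks---that definable simplices integrate and satisfy Stokes---have already been isolated and proved in Corollary~\ref{cor:int_simplex} and Proposition~\ref{prop:definable_stokes}, so the present proposition is essentially an assembly of these inputs through the common resolution.
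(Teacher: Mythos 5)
Most of your route coincides with the paper's: you invoke Theorem~\ref{thm:fin_vol} for $S_\bullet^\stokes(X)$, cite \cite[Theorem~2.3.10]{period-buch} for the simplicial complex, and observe that the cone $\hat\sigma$ of a definable (resp.\ definable and $C^1$ on open faces) simplex is again of the same type, so that the fine-resolution argument of Theorem~\ref{thm:fin_vol} applies verbatim to $S_\bullet^\df(X)$ and $S_\bullet^{\df,C^1}(X)$ --- this is exactly what the paper does. Your explicit remarks that Corollary~\ref{cor:int_simplex} and Proposition~\ref{prop:definable_stokes} are what make the arrow $S_\bullet^{\df,C^1}(X)\hookrightarrow S_\bullet^\stokes(X)$ well defined, and that the inclusions are quasi-isomorphisms because they induce the identity map between the common sheaf resolutions, are sound and in fact a bit more explicit than the paper's proof.

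There is, however, one false step: the cone of a smooth simplex is \emph{not} smooth in general, so your argument for the vertex $S_\bullet^\infty(X)$ fails as written. Take $d=1$ and $\sigma:\Delta_1\to\R$, $\sigma(x)=x^2$. The paper's cone formula gives $\hat\sigma(a_0,a_1)=a_1^2/(a_0+a_1)$ away from the origin and $0$ there; this map is not even differentiable at the cone point, since its restrictions to the rays $a_1=0$, $a_0=0$ and $a_0=a_1$ (values $0$, $a_1$, and $a_1/2$ respectively) admit no common linearization. Hence the class of smooth simplices is not closed under $\sigma\mapsto\hat\sigma$, and the fine-resolution-with-coning argument cannot be run inside $S_\bullet^\infty(X)$. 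This failure is precisely the phenomenon that motivates the paper's class of simplices satisfying Stokes: it is the weakening of smoothness (continuous, $C^1$ on open faces, finite volume, Stokes' formula) that \emph{is} stable under coning, by Propositions~\ref{prop:fin_vol} and~\ref{prop:kegel_stokes}. The paper sidesteps the issue for $S_\bullet^\infty(X)$ by citing \cite[Section~5.31]{warner} for that vertex rather than rederiving it; you should do the same (or supply a genuinely different argument, e.g.\ via smoothing of simplices), since your justification for this one inclusion is the only step in your proposal that is actually wrong. Note that the inclusion $S_\bullet^\infty(X)\hookrightarrow S_\bullet^\stokes(X)$ itself is unproblematic --- smooth simplices do satisfy Stokes --- the gap is only in your proof that $S_\bullet^\infty(X)$ computes singular homology.
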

\begin{proof}All complexes compute singular homology. We mentioned a reference for simplicial homology above. For the Stokes version 
this is Theorem~\ref{thm:fin_vol}. The same proof also applies in the definable cases $S^\df_\bullet(X)$ and $S^{\df,C^1}(X)$ because $\hat{\sigma}$ is definable (and $C^1$ on open faces) if $\sigma$ is. Alternatively, we use Remark~\ref{rem:defn_hom} and apply the main comparison theorem of Edmundo and Woerheide in \cite{edmundo-woerheide} to singular homology and definable homology.

The case of $S^\infty_*(X)$ is in \cite[Section~5.31]{warner}. 
\end{proof}

The \emph{period pairing}
\[ S^\infty_d(X)\times A^d(X)\to \R,\quad(\sigma,\omega)\to\int_{\Delta_d}\sigma^*\omega\]
induces a pairing of complexes by Stokes' theorem. Stokes' theorem also holds for $S^{\stokes}_*(X)$, its subcomplex $S^{\df,C^1}(X)$  and for $S^\df_*(X)$. We also get well-defined pairings of complexes in these cases.

\begin{thm}\label{main}Let $X$ be a definable manifold with corners and $(K,\Phi)$ a definable triangulation of $X$. Then the period pairing can be computed by integration on continuous definable simplices. In other words, the pairing extends to a pairing of complexes of $A^\bullet(X)$ with $S_\bullet^\stokes(X)$, $S^{\df,C^1}_\bullet(X)$, $S_\bullet^\df(X)$  and $S_\bullet^\Delta(X)$ in a 
compatible way with the quasi-isomorphisms
\[\begin{xy}
\xymatrix{
&S_\bullet^\df(X)&&S_\bullet^\stokes(X)\\
S_\bullet^\Delta(X)\ar@{^{(}->}[ru]& &S_\bullet^{\df,C^1}\ar@{_{(}->}[lu]\ar@{^{(}->}[ru] &&S_\bullet^\infty(X)\ar@{_{(}->}[lu]
}
\end{xy}\]
\end{thm}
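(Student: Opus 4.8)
The plan is to verify two things for each of the four complexes appearing in the theorem: first, that integration $(\sigma,\omega)\mapsto\int_{\Delta_d}\sigma^*\omega$ is well-defined on the chains in question, and second, that it descends to a pairing of \emph{complexes}, i.e.\ that the integral respects the differentials via Stokes' formula. The compatibility with the quasi-isomorphisms in the diagram is then essentially formal, since all the maps are inclusions of subcomplexes and the integration formula is literally the same on each. So the real content is the well-definedness and the Stokes-compatibility on each of the four complexes, and I would dispatch them in decreasing order of difficulty of the regularity hypotheses.

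\emph{Well-definedness of the integral.} For $S_\bullet^\stokes(X)$ the integral converges by the very Definition~\ref{defn:fvol} of finite volume, which is built into the complex. For $S_\bullet^{\df,C^1}(X)$ and $S_\bullet^\df(X)$, convergence is exactly Corollary~\ref{cor:int_simplex}: every definable continuous $\sigma$ has $\sigma^*(\omega)$ measurable and absolutely integrable over $\Delta_d$ for any $C^1$-form $\omega$. For $S_\bullet^\Delta(X)$ the simplices are the $C^1$ pieces of the chosen definable triangulation $(K,\Phi)$, so $\Phi\circ\tau$ is globally $C^1$ and the integral is that of a $C^0$-form, hence finite; here I must also fix the convention that the pairing on $S^\Delta$ means integrating $\omega$ against the image simplices $\Phi(\tau)$, which is legitimate because $X$ is assumed compact in this case.

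\emph{Compatibility with the differentials.} This is the step that requires Stokes' formula, and it is where the bulk of the earlier work is used. For $S_\bullet^\stokes(X)$ it is Definition~\ref{defn:stokes} together with Theorem~\ref{thm:fin_vol}. For $S_\bullet^{\df,C^1}(X)$ and $S_\bullet^\df(X)$ it is Proposition~\ref{prop:definable_stokes}: for a definable continuous $\sigma$ and a $(d-1)$-form $\omega$ (here using that $X$ is $C^\infty$, so $\omega$ is as smooth as needed) we have $\int_{\Delta_d}\sigma^*(\md\omega)=\int_{\partial\Delta_d}\sigma^*(\omega)$, which is precisely the statement that $\langle\partial\sigma,\omega\rangle=\langle\sigma,\md\omega\rangle$ after summing over the boundary faces. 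For $S_\bullet^\Delta(X)$ it is the $C^1$-version of Stokes (Whitney), applied face-by-face to $\Phi\circ\tau$ exactly as in the proof of Proposition~\ref{prop:definable_stokes_II}, with the usual cancellation of interior faces of the triangulation leaving only the simplicial boundary. In every case the map of pairings commutes with the two differentials, so we obtain genuine morphisms of complexes $S_\bullet^?(X)\to\mathrm{Hom}(A^\bullet(X),\R)$, and since the inclusions in the diagram are induced by identity-on-simplices maps, compatibility across the diagram is immediate.

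\emph{The main obstacle} is the matching of boundary conventions between the abstractly-defined simplicial complex $S_\bullet^\Delta(X)$ and the singular picture. The quasi-isomorphism $S_\bullet^\Delta(X)\hookrightarrow S_\bullet^\df(X)$ is only functorial for simplicial homology (as the theorem warns), and one has to be careful that the orientation each $d$-simplex $\tau\in K_d$ inherits from the triangulation is the one that makes the simplicial boundary operator agree with $\partial$ on the corresponding definable simplices, so that the two Stokes formulas glue. Concretely, I would check that the pseudo-orientation transported through Lemma~\ref{lem:transport} is consistent across adjacent simplices, so that the interior $(d-1)$-faces cancel in pairs and only $\partial|K|$ survives; this is the point where the compactness of $X$ and the global $C^1$-property of $\Phi$ are genuinely needed, and everything else is bookkeeping.
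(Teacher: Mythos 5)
Your overall strategy coincides with the paper's: the pairing is integration in every case, compatibility across the diagram is formal since the inclusions are identity-on-simplices, and the substance is convergence (the finite-volume condition for $S^\stokes_\bullet(X)$, Corollary~\ref{cor:int_simplex} for the definable complexes) plus Stokes' formula (Definition~\ref{defn:stokes} for $S^\stokes_\bullet(X)$, Proposition~\ref{prop:definable_stokes} for $S^{\df,C^1}_\bullet(X)$ and $S^\df_\bullet(X)$). For these three complexes your argument is correct and matches the paper's proof.

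The gap is in your treatment of $S^\Delta_\bullet(X)$. You assert that the simplices $\Phi\circ\tau$ of the triangulation are globally $C^1$ and then invoke Whitney's $C^1$-version of Stokes face-by-face. But the theorem only assumes that $(K,\Phi)$ is a \emph{definable} triangulation; such a triangulation (cf.\ Proposition~\ref{prop:triangle_manifold}) is in general only $C^p$ on \emph{open} faces, and global $C^1$-regularity is exactly the extra ``strict'' property furnished by Czap{\l}a--Paw{\l}ucki's theorem, which is not a hypothesis here. So the step ``$\Phi\circ\tau$ is globally $C^1$, hence the pull-back is $C^0$ and Whitney applies'' fails. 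The repair is the one the paper uses, and it is simpler than what you propose: $S^\Delta_\bullet(X)$ is a subcomplex of $S^\df_\bullet(X)$, because each $\Phi\circ\tau$ is a definable continuous simplex; convergence is then Corollary~\ref{cor:int_simplex} and Stokes is Proposition~\ref{prop:definable_stokes}, with no regularity input at all. Relatedly, your ``main obstacle'' paragraph solves the wrong problem: for the pairing of complexes one needs only the identity
\[ \int_{\Delta_d}(\Phi\circ\tau)^*(\md \omega)=\int_{\partial \Delta_d}(\Phi\circ\tau)^*(\omega) \]
for each single $\tau\in K_d$, i.e., simplex-wise Stokes. No cancellation of interior $(d-1)$-faces between adjacent simplices and no orientation matching across $|K|$ is required --- that mechanism belongs to Proposition~\ref{prop:definable_stokes_II} (integrating $\md\omega$ over all of a definable set $G$), not to the chain-level compatibility asserted in the theorem.
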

\begin{proof}
In each case, the pairing is given by integration, hence the pairings are compatible. 

To make the pairings well-defined as pairings of complexes,
we have to check Stokes' formula. For $S^{\stokes}_\bullet(X)$ it holds by assumption. For its subcomplex $S^{\df,C^1}_\bullet(X)$ and the full $S^\df_\bullet(X)$ it holds by Proposition~\ref{prop:definable_stokes}. It holds for $S^\infty_\bullet(X)$ as a subcomplex of $S^\stokes_\bullet(X)$ (which is a complicated way of saying that the standard Theorem of Stokes holds for smooth simplices). It holds
for $S_\bullet^\Delta(X)$ as a subcomplex of $S^\df_\bullet(X)$.
\end{proof}

\begin{rem}\label{comment}
In the context of periods in the number theoretic sense 
(see \cite{period-buch}) 
or exponential periods (see \cite{expper}), both appearing in \cite{kontsevich_zagier}, one wishes to represent all homology classes of compact definable manifolds with corners (for certain o-minimal structures) by definable simplices and express the abstract period pairing by integration. This is achieved in \cite{expper}, but the above Theorem~\ref{main} is conceptually clearer and more flexible.

In \cite{expper}, we describe singular homology by  simplices $\sigma:\Delta_d\to X$ which are globally $C^1$ (or strictly $C^1$ in the terminology of \cite{omin-triang}), not only on open faces. They satisfy Stokes (in the sense of the present paper) by the $C^1$-version of Stokes' Theorem proved by Whitney in \cite{whitney} (see also the proof of Proposition~\ref{prop:definable_stokes} for details). Let us denote their complex by $S^{\glC}_\bullet(X)$. This implies that the period pairing $S^{\glC}_\bullet(X)\times A^\bullet(X)\to\R$ is given by integration. 

In a second step  \cite[Proposition~7.6]{expper} exploits the full strength of
\cite{omin-triang} to construct a definable strict $C^1$-triangulation of $X$: the Main Theorem of \cite{omin-triang}  is applied to a family of graphs  of a partition of unity subordinate to the charts of the manifold. 
Alternatively, we could invoke \cite{kawakami} to embed $X$ into $\R^N$ as a $C^1$-manifold and
apply the existence of $C^1$-triangulations of \cite{omin-triang} (in the strict sense) to $\R^N$.

In the notation of the present paper: There is
a definable triangulation $(K,\Phi)$ of $X$ such that all elements of $\Phi$ are globally $C^1$.
The inclusion $S^\Delta_\bullet(X)\to S_\bullet^\glC(X)$ is a quasi-isomorphism.
This achieves the aim.

Why should we extend to all definable simplices? The $C^1$-condition is artificial given that the integral converges in general. Theorem~\ref{main} is conceptually clearer.

On a more technical level: the functoriality of the constructions in \cite{expper} is bad because simplicial homology only becomes functorial when passing to homology - not on the level of complexes. This causes technical problems when generalising the results from manifolds to singular spaces and to relative homology---something that is very much needed in \cite{expper} as well.
Theorem~\ref{main} is more flexible.
\end{rem}

\begin{appendix}

\section{Existence of triangulations}
\begin{center}by Johan Commelin and Annette Huber\end{center}

As in Section~\ref{sec:tame} we work in the setting of definable manifolds
in a fixed o-minimal structure on the real field.
By \emph{definable} we always mean definable with parameters in a fixed subfield $k$ of $\R$. 

The existence of triangulations is known for definable sets. We extend this to
the manifold setting.

\begin{prop}\label{prop:triangle_manifold}
 Let $0\leq p<\infty$ and $X$ be a compact definable $C^p$-manifold with corners,
 $A_1,\dots,A_M$ definable subsets of $X$.
 Then there is a definable triangulation of $X$ compatible with $A_1,\dots,A_M$
which is $C^p$ on open faces and
 such that every simplex is contained in a chart.
\end{prop}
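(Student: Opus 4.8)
The plan is to reduce the statement to the classical triangulation theorem for definable subsets of Euclidean space by first embedding $X$ definably into some $\R^m$. Since $X$ carries only a finite atlas $(U_i,\phi_i)_{i=1}^n$ with $\phi_i\colon U_i\to V_i\subset\R^N_{\ge 0}$, and we do not want to invoke the (only conjectural, in the corner case) embedding result of Remark~\ref{rem:embedd_corners}, I would build the embedding by hand. Choose a definable partition of unity $\{\lambda_i\}$ of class $C^p$ subordinate to the atlas, so that $\operatorname{supp}\lambda_i\subset U_i$ and $\sum_i\lambda_i=1$. Define $\lambda_i\phi_i\colon X\to\R^N$ by $\lambda_i(x)\phi_i(x)$ on $U_i$ and by $0$ off $\operatorname{supp}\lambda_i$; this is a continuous, definable, and --- since $\operatorname{supp}\lambda_i\subset U_i$ is closed --- $C^p$ extension, because $X$ is covered by the two open sets $U_i$ and $X\ssm\operatorname{supp}\lambda_i$ on which the two descriptions agree. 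Assemble
\[ \iota=(\lambda_1\phi_1,\dots,\lambda_n\phi_n,\lambda_1,\dots,\lambda_n)\colon X\longrightarrow (\R^N)^n\times\R^n. \]
This $\iota$ is injective: if $\iota(x)=\iota(y)$, pick $i$ with $\lambda_i(x)>0$; then $\lambda_i(y)=\lambda_i(x)>0$, so $x,y\in U_i$ and $\phi_i(x)=\phi_i(y)$, whence $x=y$. As $X$ is compact, $\iota$ is a homeomorphism onto the compact definable set $\iota(X)$.

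The decisive point for regularity is that the charts can be recovered \emph{rationally} from $\iota$: on the open locus where the $i$-th weight coordinate is positive one has $\phi_i\circ\iota^{-1}(y)=(\text{$i$-th $\R^N$-block of }y)\big/(\text{$i$-th weight coordinate of }y)$, a smooth (indeed rational) function of $y$ with nonvanishing denominator. To exploit this I introduce the closed definable sets $Z_i=\{x\in X: \lambda_i(x)\ge 1/n\}$; since $\sum_i\lambda_i=1$ they cover $X$, and $Z_i\subset\operatorname{supp}\lambda_i\subset U_i$. Now apply the definable triangulation theorem, in its version that is $C^p$ on open simplices (e.g.\ \cite[Chapter~8]{D:oMin} combined with $C^p$ cell decomposition), to the compact definable set $\iota(X)$, compatibly with the finitely many definable subsets $\iota(A_1),\dots,\iota(A_M)$ and $\iota(Z_1),\dots,\iota(Z_n)$. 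This produces a finite simplicial complex $K$ and a definable homeomorphism $g\colon|K|\to\iota(X)$ that is $C^p$ on each open simplex, and I set $\Phi=\iota^{-1}\circ g$.

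It then remains to verify the three asserted properties of $(K,\Phi)$. Compatibility with the $A_j$ is immediate from injectivity of $\iota$, since $A_j=\iota^{-1}(\iota(A_j))$ is a union of images of open simplices. For chart-containment, each open simplex $\sigma^\circ$ lands via $g$ in some $\iota(Z_i)$, because the $Z_i$ cover $X$ and the triangulation is compatible with each $\iota(Z_i)$; as $Z_i$ is closed, the whole closed simplex satisfies $\Phi(\sigma)\subset Z_i\subset U_i$. Finally, on $\iota(Z_i)$ the $i$-th weight coordinate is $\ge 1/n>0$, so $\phi_i\circ\iota^{-1}$ is smooth on a neighbourhood of $g(\sigma^\circ)$ in $(\R^N)^n\times\R^n$; composing with the $C^p$ map $g|_{\sigma^\circ}$ shows that $\phi_i\circ\Phi$ is $C^p$ on the open face, which is precisely $C^p$-ness of $\Phi$ read in the chart $\phi_i$.

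The analytic substance is entirely outsourced to the cited Euclidean triangulation, so the only genuine obstacle is the bookkeeping that ties chart-containment to regularity: one must arrange that the very sets $Z_i$ used to force each simplex into a chart are also the sets on which the chart map is recovered smoothly from $\iota$. Bounding $\lambda_i$ away from $0$ on $Z_i$ is exactly what makes both hold at once; without it, the rational recovery of $\phi_i$ would degenerate near the boundary of a chart and the $C^p$-on-open-faces conclusion would fail.
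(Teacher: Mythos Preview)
Your argument is correct. It follows the same overall skeleton as the paper's \emph{first} proof---embed $X$ definably into a Euclidean space and invoke the affine triangulation theorem (the paper's Lemma~\ref{lem:triangle_affine})---but instead of importing the embedding from Kawakami you build it explicitly from a definable $C^p$ partition of unity. This is precisely the construction the paper alludes to in the remark after the statement and in Remark~\ref{comment} (attributed there to \cite[Proposition~7.6]{expper}). Your version has two genuine advantages over the first proof as written: it handles corners without appeal to the unproved extension of Kawakami's result (Remark~\ref{rem:embedd_corners}), and the rational recovery $\phi_i\circ\iota^{-1}=(\text{block}_i)/(\text{weight}_i)$ on the sets $Z_i=\{\lambda_i\ge 1/n\}$ is a clean way to simultaneously force chart-containment and transport $C^p$-regularity from the ambient triangulation back to $X$. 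The paper's \emph{second} proof proceeds entirely differently, avoiding any embedding: it covers $X$ by compact pieces each inside a chart, triangulates the first piece affinely, and then inductively glues on the remaining pieces via the technical Lemma~\ref{lem:triangle_extend}; this trades your partition-of-unity step for an explicit simplicial subdivision argument. One small gap to close in your write-up: the existence of a definable $C^p$ partition of unity subordinate to a finite definable open cover deserves a citation or a one-line construction (for finite $p$ one can use, e.g., $x\mapsto\max(0,1-\|x\|^2)^{p+1}$ transported to charts), since this is the only place the hypothesis $p<\infty$ enters.
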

\begin{rem} 
We are going to give two proofs, both reducing the result to the affine case, i.e., submanifolds of $\R^N$.  This is possible for semi-algebraic Hausdorff spaces (the $C^0$-case) by \cite{Robs83}. In \cite[Proposition~7.6]{expper} this is combined with the strong
\cite{omin-triang} to deduce the existence of a triangulation in which all simplices are globally $C^1$.  The same reasoning (using \cite{shiota-book} instead of \cite{omin-triang}) also gives a triangulation that is $C^p$ on all open faces.

Robson's result was generalised to arbitrary o-minimal structures on $\R^N$ by Kawakami, see \cite{kawakami}, making the above subtleties unnecessary. This leads to the first proof below. We thank Tobias Kaiser and a referee for pointing the reference out to us.
\end{rem}

The following is well-known:
\begin{lemma}\label{lem:triangle_affine}
Proposition~\ref{prop:triangle_manifold} holds true for definable bounded subsets $X\subset\R^N$ and definable subsets $A_1,\dots,A_M$.
\end{lemma}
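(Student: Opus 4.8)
The plan is to reduce the statement entirely to the standard triangulation theorem for definable sets, since in the affine setting the requirement that each simplex lie in a chart is vacuous. First I would recall that for any bounded definable set $X\subset\R^N$ together with finitely many definable subsets $A_1,\dots,A_M$, there exists a definable triangulation $(K,\Phi)$ of $X$ compatible with the $A_j$; this is the classical result found in \cite[Chapter~8]{D:oMin} and in \cite{shiota-book}. Here compatibility means that $\Phi\colon|K|\to X$ is a definable homeomorphism and each $A_j$ is the union of the images under $\Phi$ of a subcollection of open simplices of $K$. At this level of generality the restrictions of $\Phi$ to individual simplices are only continuous.

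To obtain the required regularity, I would invoke the stronger triangulation theorems producing a $\Phi$ whose restriction to each open simplex is of class $C^p$. For $p<\infty$ this is exactly the content of Shiota's $C^p$-triangulation theorem in \cite{shiota-book}: the construction refines the underlying cell decomposition so that, away from lower-dimensional strata, the parametrisation of each cell is $C^p$. The crucial observation is that one only asks for regularity along each open face separately, never across the boundary between faces, so the much harder global smoothing obstruction addressed in \cite{omin-triang} does not intervene here.

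Finally, the condition that every simplex be contained in a chart is automatic: since $X\subset\R^N$, the inclusion furnishes a single global chart, so every simplex trivially lies in a chart. This completes the reduction and hence the proof.

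The main obstacle is not the existence of a topological triangulation, which is routine, but arranging the $C^p$ regularity along open faces. This is precisely where the finer cell-decomposition-and-smoothing machinery of \cite{shiota-book} is needed; without it one recovers only the $C^0$ statement, and it is this input that justifies calling the lemma \emph{well-known} rather than elementary.
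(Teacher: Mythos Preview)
Your proposal is correct and follows essentially the same route as the paper: both reduce to citing Shiota's $C^p$-triangulation theorem in \cite{shiota-book} for the regularity along open faces and observe that the chart condition is vacuous in the affine case. The paper additionally points to \cite{BCR} for the semi-algebraic case and to \cite{pawlucki-cp} as a more recent alternative, but the core argument is the same.
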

\begin{proof}The semi-algebraic case is \cite[Remark~9.2.3 a)]{BCR}. 
Shiota's  \cite[Chapter~II, Theorem~II]{shiota-book} contains the o-minimal case: An o-minimal structure on the real field with parameters in a subfield is an example of a system of $\mathfrak{X}$-sets and his notion of an $\mathfrak{X}$-triangulation in \cite[p.~96]{shiota-book} encompasses the differentiability assumption that we claim.

The existence of the triangulation is also a consequence of the much deeper recent \cite{pawlucki-cp}.
\end{proof}

\begin{proof}[First proof (without boundary).]
By \cite{kawakami} every $C^p$-manifold can be embedded into $\R^N$. 
Ka\-wa\-ka\-mi does not address manifolds with boundary or corners, but see Remark~\ref{rem:embedd_corners}. The image is bounded because $X$ is compact.
We then apply the affine triangulation result of Lemma~\ref{lem:triangle_affine} to $\R^N$ and the definable subsets $X$, $A_1,\dots,A_n$.
\end{proof}

We will give a second proof that uses less o-minimal analysis, but a computation with simplicial complexes.

\begin{lemma}\label{lem:triangle_extend}
 Let $0\leq p<\infty$.
 Let $X_1$ and $X_2$ be compact definable subsets of
 some ambient definable $C^p$-manifold with corners.
 Denote by $X$ the union $X_1 \cup X_2$ and by $B$ the intersection $X_1 \cap X_2$. Let $A_1,\dots,A_M\subset X$ be definable subsets.
 Assume that $X_1$ has a definable triangulation compatible with $B$ and all $A_i\cap X_1$ which is $C^p$ on all open faces of simplices. Assume that $X_2$ is affine.
 Then there is a definable triangulation of $X$ compatible with $A_1,\dots,A_M$  which is $C^p$ on all open faces of simplices.
\end{lemma}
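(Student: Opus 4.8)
The plan is to triangulate the affine piece $X_2$ so that, on the overlap $B$, its triangulation refines the one already carried by $X_1$, and then to transport this finer triangulation of $B$ back over all of $X_1$ and glue the two triangulations along $B$, where they now agree. First I would use the given triangulation of $X_1$, which is compatible with $B$, to realise $B$ as the union of the images of the closed simplices $\psi(\bar\sigma)$ of a subcomplex; since $X_2$ is affine, these images are definable compact subsets of $B\subset\R^N$. Applying Lemma~\ref{lem:triangle_affine} to $X_2$ with the list of definable subsets consisting of $B$, all the cells $\psi(\bar\sigma)$, and the traces $A_i\cap X_2$ yields a triangulation $T_2$ of $X_2$ that is $C^p$ on open faces and whose restriction $L'$ to $B$ refines the triangulation $L$ induced from $X_1$.

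The second half is to promote this refinement $L'$ of $L$ on $B$ to a triangulation of all of $X_1$: I would produce a triangulation $T_1$ of $X_1$, still $C^p$ on open faces and compatible with the $A_i\cap X_1$, whose restriction to $B$ equals $L'$ \emph{exactly}. Since $L$ and $L'$ have the same underlying space $B$ and $L'$ is cellwise finer, the comparison map $|L'|\to|L|$ is the identity on $B$ and carries open cells into open cells, so this is the definable analogue of the standard PL fact that a subdivision of a subcomplex extends over the whole complex. Once $T_1$ and $T_2$ both restrict to $L'$ on $B$, they amalgamate into a single simplicial complex triangulating $X=X_1\cup X_2$; compatibility with each $A_i=(A_i\cap X_1)\cup(A_i\cap X_2)$ and the condition of being $C^p$ on open faces can then be verified piece by piece, since every simplex lies in $X_1$ or in $X_2$.

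The hard part will be this extension step, and it is the genuinely simplicial computation the approach rests on. The tension is that keeping $L'$ rigid on $B$ is what prevents an infinite regress of re-subdivisions, yet a naive reapplication of Lemma~\ref{lem:triangle_affine} to $X_1$ would re-triangulate $B$ and reintroduce the mismatch. I expect to resolve this by interpolating between $L$ and $L'$ through a mapping-cylinder (coning) construction over the cellwise-identity comparison map: realising the cylinder as an affine definable set and triangulating it so that it restricts to $L'$ on the relevant end, one attaches it to $X_1$ in a definable neighbourhood of $B$ and checks that the total space stays homeomorphic to $X_1$ and that the glued parametrisation is a homeomorphism. The delicate point throughout is that the identification of the two triangulations of $B$ is only definable and topological, never smooth, so all $C^p$-regularity must be imported from the affine triangulation of the auxiliary pieces rather than extracted from the comparison map itself.
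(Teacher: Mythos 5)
Your first half runs parallel to the paper's own proof: triangulate the affine piece $X_2$ by Lemma~\ref{lem:triangle_affine}, feeding in the images of the closed cells of the given triangulation $(K_1,h_1)$ of $X_1$ lying in $B$ together with the $A_i\cap X_2$, so that on $B$ the new triangulation $L'$ refines the induced one $L$; the problem is then reduced to extending the subdivision $L'$ of the subcomplex $L$ to a re-triangulation of all of $X_1$, after which the amalgamation and compatibility checks are routine. The genuine gap is in your proposed resolution of exactly this extension step. The mapping cylinder of the cellwise-identity comparison $|L'|\to|L|$ is just $B\times[0,1]$, and attaching it to $X_1$ along $B$ (or inserting it into a ``definable neighbourhood of $B$'') yields a space homeomorphic to $X_1$ only when $B$ admits a collar in $X_1$, i.e.\ when a neighbourhood of $B$ in $X_1$ is a product $B\times[0,1)$. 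Here $B=X_1\cap X_2$ is an arbitrary compact definable subset; in the intended application (the second proof of Proposition~\ref{prop:triangle_manifold}) the $X_i$ are compact neighbourhoods covering $X$, so $B$ typically has full dimension $\dim X_1$, and then $B\times[0,1]$ has dimension $\dim X_1+1$: no attachment of it to $X_1$ can return a space homeomorphic to $X_1$, and there is no collar to insert it into. Even for lower-dimensional $B$ the claim fails, e.g.\ for $X_1=[0,1]$ and $B=\{1/2\}$ the attachment produces a tripod. So ``the total space stays homeomorphic to $X_1$'' is precisely the assertion that breaks; the external-cylinder trick is tailored to re-triangulating collared boundary pieces, which $B$ is not, and a rescue via definable regular neighbourhoods would both require machinery not available at this point and reintroduce the same extension problem on the frontier of the neighbourhood.

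What the paper does instead is keep the whole construction internal to the simplices of $K_1$; this is the standard PL remedy and is exactly what avoids the ``infinite regress of re-subdivisions'' you worry about. After one barycentric subdivision one may assume the subcomplex over $B$ is \emph{full}: any simplex of $K_1$ all of whose vertices map into $B$ lies entirely in $B$ (and similarly for $\mathcal{T}_2=(K_2,h_2)$). Then every simplex $\sigma=(v_0,\dots,v_m,b_0,\dots,b_n)$ of $K_1$ with $h_1(b_j)\in B$ and $h_1(v_i)\notin B$ is the join of its non-$B$ face with its $B$-face, and one replaces $\sigma$ by the joins $(v_0,\dots,v_m,w_0,\dots,w_s)$ over all simplices $\tau=(w_0,\dots,w_s)$ of $\mathcal{T}_2$ subdividing that $B$-face. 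The parametrisation is written down explicitly on $\bar\Delta_{s+m+1}$: apply $h_1|_{\bar\sigma}$ to the point obtained by rescaling $g=h_1|_{\bar\sigma}^{-1}\circ h_2|_{\bar\tau}$ by the total barycentric weight $a$ of the $B$-end, the only analytic point being continuity as $a\to 0$ (the cone locus), where boundedness of $g$ and continuity of $h_1$ suffice. Definability and $C^p$-regularity on open faces are inherited from $h_1$ and $h_2$, which also answers your (correct) observation that the comparison map itself carries no smoothness. With this join construction in place of the cylinder, the remainder of your outline---the choice of subsets fed into Lemma~\ref{lem:triangle_affine}, the refinement property of $L'$, and the final gluing along $B$---goes through essentially as you wrote it.
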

\begin{proof}
The explicit formulas in the proof are based on the standard coordinates
on $\Delta_d$ as the interior of the convex hull of the basis vectors $e_0,\dots,e_d$ in $\R^{d+1}$. This deviates from Section~\ref{sec:set-up}.

 We start with a
 definable triangulation $\mathcal{T}_1=(K_1,h_1)$ of $X_1$ compatible with~$B$  and all $A_i\cap X_1$ which is $C^p$ on open faces.

 Note 
 that for every set of vertices $v_0,\dots,v_n \in
 K_1$ there is at most one open $n$-simplex with these vertices because this is the case for a simplicial complex.  We will write $(v_0,\dots,v_n)$ for this simplex and $(h_1(v_0),\dots,h_1(v_n))$ for its image in $X_1$.
 Without loss of generality we may assume that
 for every simplex $(v_0,\dots,v_n)$ in $K_1$ such that
 $h_1(v_0),\dots,h_1(v_n)$ lie in~$B$
 the entire simplex $(v_0,\dots,v_n)$ lies in~$B$
 (pass to a barycentric subdivision if necessary). 
 Now choose a triangulation $\mathcal{T}_2=(K_2,h_2)$ of $X_2$
 compatible with the images of elements of $\mathcal{T}_1|_B$ and all $A_i\cap X_2$ that is $C^p$ on open simplices.
 It exists by Lemma~\ref{lem:triangle_affine} because $X_2$ is affine.
 Again we may assume that
  if the images of the vertices of a simplex are in $B$, 
 then so is the image of the simplex. On $B$, the triangulation 
$\mathcal{T}_2$ ``subdivides'' $\mathcal{T}_1$.
 It remains to modify $\mathcal{T}_1$ on $X_1\ssm B$
 in such a way that the triangulations become compatible.

 We will now construct a set $K\subset K_1\times K_2$ of simplices, as follows.
 For every simplex
\[ \sigma=(v_0,\dots,v_m,b_0,\dots,b_n)\in K_1\]
 with $h_1(b_0),\dots,h_1(b_n)$ in~$B$ and $h_1(v_0),\dots,h_1(v_m) \notin B$,
 and for every simplex $\tau = (w_0,\dots,w_s) \in
 \mathcal{T}_2|_{(h_1(b_0),\dots, h_1(b_n))}$
 we add $(v_0,\dots,v_m,w_0,\dots,w_s)$ to~$K$.

 We make some remarks about this construction:
 \begin{itemize}
  \item The condition $\tau \in \mathcal{T}_2|_{(h_1(b_0),\dots, h_1(b_n))}$ is meaningful,
   because the image of the entire simplex $(h_1(b_0),\dots,h_1(b_n))$
   is contained in~$B$, by assumption.
   In particular, we get a triangulation  of~$h_1(\bar \sigma)$
   that is also compatible with the same construction on the faces of~$\sigma$.
  \item By taking $m = 0$,
   we see that $K$ contains all simplices of~$K_2$.
  \item Similarly, by taking $n = 0$,
   we see that $K$ contains all simplices of~$K_1$
   that do not have faces in~$B$.
 \end{itemize}

The next step is to  define the triangulation map $h:|K|\to X$. We do this on closed simplices in such a way that the definition is compatible with restriction to the faces. We fix $\sigma$ and $\tau$. 

 Since $X_1$ is compact, the closed simplex $\bar{\sigma}\in K_1$ can be identified with the standard simplex. We are given a definable map
$h_1|_{\bar{\sigma}}\colon\bar{\Delta}_{m+n+1}\to X_1$ that is a homeomorphism onto its image.
 The simplex $\tau$ gives a definable map $h_2|_{\bar{\tau}} \colon \bar{\Delta}_{s} \to B$,
 again a homeomorphism onto its image. Consider
 $g = h_1|_{\bar{\sigma}}^{-1} \circ h_2|_{\bar{\tau}} \colon \bar\Delta_{s} \to \bar\Delta_{m+n+1}$.
 We define a new map
 $h:\bar\Delta_{s+m+1} \to X_1$
 by mapping 
 \[
 \sum_{i=0}^{s+m+1}a_ie_i\mapsto h_1|_{\bar{\sigma}}\left(\sum_{i=0}^m a_ie_i
	+ a g\left(\frac{1}{a}\sum_{i=0}^{s} a_{i+m+1}e_i\right)\right)
 \]
	where the scaling factor $a$ is defined to be $\sum_{i=0}^{s-1} a_{i+m+1}$. For this, we check the limit when $a$ tends to $0$.
The value
of $g$ is bounded, hence $ag(\cdot)$ tends to $0$, when $a\to 0$. We apply the continuous function $h_1$, so the limit is
\[   h_1|_{\bar{\sigma}}\left(\sum_{i=0}^m a_ie_i \right).\]
The map $h$ takes the vertex $v_i$ (identified with $e_i$ in the formula) to $h_1(v_i)$
and the vertex $w_j$ (identified with $e_{j+m+1}$ in the formula) to $h_2(w_j)$.
The map is clearly definable and $C^p$ on all open faces. 
\end{proof}

\begin{proof}[Second proof of Proposition~\ref{prop:triangle_manifold}.]
 Let $U_1,\dots,U_n \subset X$ be an open cover of an atlas,
 $\phi_i \colon U_i \to V_i$ the charts
 with $V_i \subset \R_{\geq 0}^{n_i}\times\R^{m_i}$ open definable.
 In particular, the transition maps are $C^p$ and definable.

 For every $P \in X$ there is a compact definable neighbourhood $X_P$
 contained in one of the $U_i$.
 Finitely many of these suffice to cover $X$.
 Let $X_1,\dots,X_m$ be such a cover.
 We start with a definable triangulation on~$X_1$
 compatible with $A_i \cap X_1$ and compatible with all $X_1 \cap X_I$
 for $X_I = \bigcap_{i \in I}X_i$ for all $I\subset\{2,\dots,n\}$,
 and assume that is $C^p$ on all open faces. It exists by Lemma~\ref{lem:triangle_affine} (that is to say by \cite{shiota-book}).
 We apply Lemma~\ref{lem:triangle_extend} to obtain a definable triangulation on $X_1 \cup X_2$
 compatible with $A_i \cap (X_1 \cup X_2)$ for all $i$ and $(X_1 \cup X_2) \cap X_I$ for all $I$
 that is also $C^p$ on all open faces.
 We proceed inductively until we have found
 the desired triangulation of $X$.
\end{proof}

\begin{rem}In the case of an $o$-minimal structure where every definable subset of $\R^N$ admits a partition into smooth cells, e.g., in the semi-algebraic case, the construction gives a triangulation of compact manifolds by simplices which are smooth on all faces.
\end{rem}

\end{appendix}

\bibliographystyle{alpha}
\bibliography{periods}

\end{document}